\newtheorem{theorem}{Theorem}[section]
\newtheorem{proposition}[theorem]{Proposition}
\newtheorem{corollary}[theorem]{Corollary}
\newtheorem{lemma}[theorem]{Lemma}
\newcommand{\N}{{\mathbb N}}
\newcommand{\R}{{\mathbb R}}
\newcommand{\Mrn}{M_r}
\newcommand{\Mabn}{M_{\alpha,\beta}}
\newcommand{\Mxnz}{M_0}
\newcommand{\bxz}{b}
\newcommand{\Mxn}{M}
\newcommand{\defP}{\Omega}
\newcommand{\deftP}{\tilde{\defP}}
\title{Rates of ($T$-)asymptotic regularity of the generalized Krasnoselskii-Mann-type  iteration}
\author{Paulo Firmino${}^{a}$ and Lauren{\c t}iu Leu{\c s}tean${}^{b,c,d}$\\[2mm]
\footnotesize ${}^{a}$ Departamento de Matem\'atica, Faculdade de Ci\^encias, Universidade de Lisboa\\ 
\footnotesize ${}^{b}$ LOS, Faculty of Mathematics and Computer Science, University of Bucharest\\
\footnotesize ${}^{c}$ Simion Stoilow Institute of Mathematics of the Romanian Academy\\
\footnotesize ${}^{d}$ Institute for Logic and Data Science, Bucharest\\[1mm]
\footnotesize Emails: \protect\url{fc49883@alunos.ciencias.ulisboa.pt }, \protect\url{laurentiu.leustean@unibuc.ro}
}
\begin{document}

\date{}

\maketitle

\begin{abstract}

\noindent
In this paper we use proof mining methods to compute rates of  ($T$-)asymptotic regularity of the generalized Krasnoselskii-Mann-type iteration associated to a nonexpansive mapping 
$T:X\to X$ in a uniformly convex normed space $X$.  For special choices of the parameter sequences, we obtain quadratic rates.\\

\noindent {\em Keywords:} Rates of  ($T$-)asymptotic regularity; Krasnoselskii-Mann iteration; Uniformly convex normed spaces; Proof mining.\\

\noindent  {\it Mathematics Subject Classification 2010}:  47H05, 47H09, 47J25.

\end{abstract}

\section{Introduction}

Let $X$ be a normed space and $T:X\to X$ be a nonexpansive mapping with fixed points. 
The \emph{generalized Krasnoselskii-Mann-type iteration}, studied by Kanzow and Shehu \cite{KanShe17} in Hilbert spaces and by 
Zhang, Guo, Wang \cite{ZhaGuoWan21} in classes of uniformly convex Banach spaces, is defined as follows:
\begin{equation}\label{def-gen-KM}
x_0=x, \quad x_{n+1}=\alpha_nx_n+\beta_nTx_n + r_n,
\end{equation}
where $(r_n)_{n \in \N} \subseteq X$ and $(\alpha_n)_{n \in \N}, \,(\beta_n)_{n \in \N}  \subseteq [0,1]$ satisfy  
$\alpha_n+\beta_n \le 1$ for all $n\in\N$.

Obviously, if $r_n=0$ and $\alpha_n+\beta_n = 1$ for all $n\in\N$, 
then $(x_n)$ becomes the well-known Krasnoselskii-Mann iteration \cite{Kra55,Man53,Gro72}. As pointed out in \cite{KanShe17,ZhaGuoWan21}, 
by letting $\alpha_n=1-\beta_n$ and $r_n=\beta_ne_n$, where $(e_n) \subseteq X$ is a sequence of error terms, one obtains the inexact Krasnoselskii–Mann iteration studied by 
Combettes \cite{Com04} in Hilbert spaces, by Kim and Xu \cite{KimXu07} in Banach spaces and, more recently, by Liang, Fadili and Peyr\'e \cite{LiaFadPey16}.
Furthermore, by taking $r_n=\delta_n u$, where $u\in X$ and $(\delta_n) \subseteq [0,1]$ is a sequence satisfying $\alpha_n+\beta_n+\delta_n=1$ for all $n\in\N$, 
we get an iteration studied by Yao, Liou, Zhou \cite{YaoLioZho09}, Hu \cite{Hu08} and Hu and Liu \cite{HuLiu09}.

Kanzow and Shehu \cite{KanShe17} prove, in Hilbert spaces,  the weak convergence of the  generalized Krasnoselskii-Mann-type iteration $(x_n)$ to a fixed point of $T$ under 
some  hypotheses on the parameter sequences $(\alpha_n)$, $(\beta_n)$, $(r_n)$. Zhang, Guo, Wang \cite{ZhaGuoWan21} generalize this weak 
convergence result to uniformly convex Banach spaces that satisfy additional properties. By analyzing the proofs from \cite{ZhaGuoWan21} one 
can see that, as it happens for numerous weak/strong convergence proofs of nonlinear iterations, an essential intermediate step is to obtain 
the ($T$-)asymptotic regularity \cite{BroPet66,BorReiSha92} of $(x_n)$.

We say that $(x_n)$ is $T$-asymptotically regular if $\lim\limits_{n\to \infty} \|x_n-Tx_n\|=0$ and that 
$(x_n)$ is asymptotically regular if $\lim\limits_{n\to \infty} \|x_{n+1}-x_n\|=0$.  A mapping 
$\Phi:\N \to \N$ is said to be 
\begin{enumerate}
\item  a rate of $T$-asymptotic regularity of $(x_n)$ if $\Phi$ is a rate of convergence towards $0$ of the sequence $(\|x_n-Tx_n\|)$;
\item  a rate of asymptotic regularity of $(x_n)$ if $\Phi$ is a rate of convergence towards $0$ of the sequence $(\|x_{n+1}-x_n\|)$.
\end{enumerate}
If $\Phi$ is a rate of ($T$-)asymptotic regularity of $(x_n)$,  we also say that $(x_n)$ is ($T$-)asymptotically regular with rate $\Phi$.

In this paper we apply methods of proof mining \cite{Koh08,Koh19} developed by the second author for the 
Ishikawa iteration of a nonexpansive mapping \cite{Leu10,Leu14}
to  compute for the first time uniform rates of ($T$-)asymptotic regularity of the generalized 
Krasnoselskii-Mann-type iteration $(x_n)$ in 
uniformly convex normed spaces. Furthermore, quadratic 
such rates are obtained for special choices of the parameter sequences.

\mbox{}

Notations: $\N^*=\N\setminus \{0\}$, \, $\R_+=[0,\infty)$.  If $m,n\in\N$, $n\geq m$, we write $[m,n]=\{m,m+1,\ldots, n\}$.

\section{Generalized Krasnoselskii-Mann-type iteration}

In the following, $X$ is a normed space, $T:X\to X$ a nonexpansive mapping and  $(x_n)$ is the 
generalized Krasnoselskii-Mann-type iteration defined by \eqref{def-gen-KM}. We denote by 
$Fix(T)$ the set of fixed points of $T$.

For every $z\in X$, let $(K_{z,n})$ be a sequence of real numbers defined as follows:
\begin{align*}
K_{z,0} & =\|x-z\|, \\
K_{z,n} & =\|x-z\|+\sum_{i=0}^{n-1}\beta_i \|Tz-z\| + 
\sum_{i=0}^{n-1}(1-\alpha_i-\beta_i)\|z\| + \sum_{i=0}^{n-1}\|r_i\| \quad\text{~~for $n\geq 1$}.
\end{align*}

Thus, for all $n\geq 0$,
\begin{align*}
K_{z,n+1} &= K_{z,n}+\beta_n\|Tz-z\|+ (1-\alpha_n-\beta_n)\|z\|+\|r_n\|.
\end{align*}

\begin{lemma}
For all $z\in X$ and all $n\in\N$, 
\begin{align}
\|x_{n+1}-z\| & \leq (\alpha_n+\beta_n)\|x_n-z\| +\beta_n\|Tz-z\|+ (1-\alpha_n-\beta_n)\|z\|+\|r_n\|,
\label{ineq-xn+1-z-xn-z} \\
\|x_n-z\| & \leq K_{z,n}, \label{ineq-xn-z-Kzn}\\
\|x_{n+1}-x_n\| & \leq 2K_{z,n+1}, \label{ineq-xn+1-xn}\\
\|x_n-Tx_n\| & \le 2\|x_n-z\|+\|z-Tz\|\leq 2K_{z,n} + \|z-Tz\|. \label{ineq-xn-Txn}
\end{align}
\end{lemma}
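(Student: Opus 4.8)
The plan is to prove the four inequalities in order, since each one builds on the previous. I expect the whole lemma to be a sequence of routine triangle-inequality estimates; the only slightly delicate point is the first inequality, from which everything else cascades.

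First I would establish \eqref{ineq-xn+1-z-xn-z}. Starting from the defining recurrence $x_{n+1}=\alpha_nx_n+\beta_nTx_n+r_n$, the trick is to write $z$ as $(\alpha_n+\beta_n)z + (1-\alpha_n-\beta_n)z$ so that the coefficients on $x_n$, $Tx_n$, and the artificial $z$-terms match up. Then
\begin{align*}
x_{n+1}-z &= \alpha_n(x_n-z)+\beta_n(Tx_n-z)-(1-\alpha_n-\beta_n)z+r_n.
\end{align*}
Taking norms and applying the triangle inequality gives $\|x_{n+1}-z\|\le \alpha_n\|x_n-z\|+\beta_n\|Tx_n-z\|+(1-\alpha_n-\beta_n)\|z\|+\|r_n\|$. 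The key estimate is on $\|Tx_n-z\|$: by nonexpansivity $\|Tx_n-Tz\|\le\|x_n-z\|$, so inserting $Tz$ via the triangle inequality yields $\|Tx_n-z\|\le\|x_n-z\|+\|Tz-z\|$. Substituting this bound and collecting the $\|x_n-z\|$ terms produces exactly the coefficient $\alpha_n+\beta_n$ in front of $\|x_n-z\|$, together with the $\beta_n\|Tz-z\|$ term, which is \eqref{ineq-xn+1-z-xn-z}.

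Next, \eqref{ineq-xn-z-Kzn} follows by induction on $n$. The base case $\|x_0-z\|=\|x-z\|=K_{z,0}$ is immediate. For the inductive step, I would apply \eqref{ineq-xn+1-z-xn-z}, use $\alpha_n+\beta_n\le 1$ together with the induction hypothesis $\|x_n-z\|\le K_{z,n}$ to bound $(\alpha_n+\beta_n)\|x_n-z\|\le K_{z,n}$, and then recognize that adding the remaining terms reproduces the recurrence $K_{z,n+1}=K_{z,n}+\beta_n\|Tz-z\|+(1-\alpha_n-\beta_n)\|z\|+\|r_n\|$ established just before the lemma. This gives $\|x_{n+1}-z\|\le K_{z,n+1}$.

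Finally, \eqref{ineq-xn+1-xn} and \eqref{ineq-xn-Txn} are short consequences of \eqref{ineq-xn-z-Kzn} plus the triangle inequality, using that $(K_{z,n})$ is nondecreasing. For \eqref{ineq-xn+1-xn}, write $\|x_{n+1}-x_n\|\le\|x_{n+1}-z\|+\|z-x_n\|\le K_{z,n+1}+K_{z,n}\le 2K_{z,n+1}$, the last step using monotonicity of $(K_{z,n})$. For \eqref{ineq-xn-Txn}, insert $z$ and $Tz$: $\|x_n-Tx_n\|\le\|x_n-z\|+\|z-Tz\|+\|Tz-Tx_n\|$, and nonexpansivity gives $\|Tz-Tx_n\|\le\|z-x_n\|$, so $\|x_n-Tx_n\|\le 2\|x_n-z\|+\|z-Tz\|$; applying \eqref{ineq-xn-z-Kzn} then yields the bound $2K_{z,n}+\|z-Tz\|$. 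I do not anticipate any genuine obstacle here — the entire argument is bookkeeping of triangle inequalities and the nonexpansivity of $T$, with the induction in \eqref{ineq-xn-z-Kzn} being the only structural step worth writing out carefully.
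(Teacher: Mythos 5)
Your proposal is correct and follows essentially the same route as the paper: the same decomposition $x_{n+1}-z=\alpha_n(x_n-z)+\beta_n(Tx_n-z)-(1-\alpha_n-\beta_n)z+r_n$ with nonexpansivity inserted via $Tz$, the same induction for \eqref{ineq-xn-z-Kzn} using the recurrence for $K_{z,n+1}$, and the same triangle-inequality arguments for \eqref{ineq-xn+1-xn} and \eqref{ineq-xn-Txn}. Your explicit remark that \eqref{ineq-xn+1-xn} uses the monotonicity of $(K_{z,n})$ (which holds since each increment $\beta_n\|Tz-z\|+(1-\alpha_n-\beta_n)\|z\|+\|r_n\|$ is nonnegative) is a point the paper leaves implicit, but otherwise there is no substantive difference.
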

\begin{proof}
We have that for all $n\in\N$,
\begin{align*}
\|x_{n+1}-z\| &=\|\alpha_nx_n+\beta_nTx_n+r_n-z\|= \|\alpha_n(x_n-z)+\beta_n(Tx_n-z)+
r_n-(1-\alpha_n-\beta_n)z\|\\
&\le \alpha_n\|x_n-z\|+\beta_n\|Tx_n-z\|+\|r_n-(1-\alpha_n-\beta_n)z\|\\
&\le \alpha_n\|x_n-z\|+\beta_n\|Tx_n-Tz\|+\beta_n\|Tz-z\|+\|r_n\|+\|(1-\alpha_n-\beta_n)z\|\\
&\le (\alpha_n+\beta_n)\|x_n-z\| +\beta_n\|Tz-z\|+ (1-\alpha_n-\beta_n)\|z\|+\|r_n\|,
\end{align*}
as $T$ is nonexpansive and $\alpha_n+\beta_n\leq 1$.
Thus, \eqref{ineq-xn+1-z-xn-z} holds. 

\eqref{ineq-xn-z-Kzn} is proved by induction on $n$. The case $n=0$ is obvious.

$n \Rightarrow n+1$:
\begin{align*}
\|x_{n+1}-z\| & \stackrel{\eqref{ineq-xn+1-z-xn-z}}{\le} (\alpha_n+\beta_n)
\|x_n-z\|+\beta_n\|Tz-z\|+ (1-\alpha_n-\beta_n)\|z\| + 
\|r_n\|\\
& \leq \|x_n-z\|+\beta_n\|Tz-z\|+ (1-\alpha_n-\beta_n)\|z\| + 
\|r_n\| \quad \text{as~}\alpha_n+\beta_n \le 1\\
& \leq K_{z,n} + \beta_n\|Tz-z\|+(1-\alpha_n-\beta_n)\|z\| + 
\|r_n\| \text{~by the induction hypothesis}\\
& = K_{z,n+1}.
\end{align*}

\eqref{ineq-xn+1-xn} and \eqref{ineq-xn-Txn} follow easily: 
\begin{align*}
\|x_{n+1}-x_n\| &\leq \|x_{n+1}-z\| + \|x_n-z\| \stackrel{\eqref{ineq-xn-z-Kzn}}{\le} K_{z,n+1} + K_{z,n} 
\leq 2K_{z,n+1},\\
\|x_n-Tx_n\| &\le \|x_n-z\|+\|z-Tz\|+\|Tz-Tx_n\|\le 2\|x_n-z\|+\|z-Tz\|\\
& \stackrel{\eqref{ineq-xn-z-Kzn}}{\le} 2K_{z,n} + \|z-Tz\|.
\end{align*}
\end{proof}

A very useful immediate consequence of \eqref{ineq-xn-Txn} is the following. 

\begin{corollary}\label{cor-xn-Txn-xn-z-fixed-point}
Assume that $z\in Fix(T)$. Then for all $n\in\N$, 
$$
\|x_n-Tx_n\|  \leq 2\|x_n-z\|\leq  2K_{z,n}. 
$$
\end{corollary}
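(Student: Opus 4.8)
The plan is to derive the claim directly from inequality \eqref{ineq-xn-Txn} of the preceding lemma, which has already been established for an arbitrary $z \in X$ and states
\[
\|x_n-Tx_n\| \le 2\|x_n-z\|+\|z-Tz\| \le 2K_{z,n}+\|z-Tz\|.
\]
The only new information in the corollary is the hypothesis $z\in Fix(T)$, so the first thing I would do is unpack this hypothesis into the equality $Tz=z$, which immediately yields $\|z-Tz\|=0$.

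Substituting $\|z-Tz\|=0$ into both inequalities above collapses the error term and produces exactly
\[
\|x_n-Tx_n\| \le 2\|x_n-z\| \le 2K_{z,n},
\]
which is the assertion. Since \eqref{ineq-xn-Txn} holds for every $n\in\N$, so does the resulting chain, and no induction, case distinction, or further estimate is required.

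There is essentially no obstacle here: the entire analytic content is already packaged into \eqref{ineq-xn-Txn}, and the corollary merely specializes that bound to a fixed point. The one point I would double-check is that the passage $\|x_n-z\|\le K_{z,n}$ underlying the second inequality is precisely \eqref{ineq-xn-z-Kzn}, so that $2\|x_n-z\|\le 2K_{z,n}$ is justified without any additional assumption on the parameter sequences $(\alpha_n)$, $(\beta_n)$, $(r_n)$.
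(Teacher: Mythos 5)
Your proposal is correct and matches the paper exactly: the paper states the corollary as an immediate consequence of \eqref{ineq-xn-Txn}, which is precisely your specialization $Tz=z$, hence $\|z-Tz\|=0$, with the middle inequality coming from \eqref{ineq-xn-z-Kzn}. Nothing further is needed.
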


\begin{lemma}
For all $n\in \N$, 
\begin{align}
\|x_{n+1}-x_n\| & \le \beta_n\|x_n-Tx_n\|+(1-\alpha_n-\beta_n)\|x_n\| + \|r_n\|, \label{ineq-xnp1-xn}\\
\|x_{n+1}-Tx_{n+1}\| & \le \|x_n-Tx_n\| + 2(1-\alpha_n-\beta_n)\|x_n\| +  2 \|r_n\|.  \label{ineq-xnp1-Txnp1}
\end{align}
\end{lemma}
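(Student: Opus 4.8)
The plan is to derive both inequalities directly from the defining recurrence \eqref{def-gen-KM}, using only the triangle inequality and the nonexpansiveness of $T$; no induction or auxiliary sequence is needed here, so this should be a short computation in the spirit of the previous lemma. The guiding principle in both cases is to regroup the terms $\alpha_nx_n+\beta_nTx_n+r_n$ so that the ``defect'' $1-\alpha_n-\beta_n$ appears as a coefficient of $x_n$, using $\alpha_n+\beta_n-1=-(1-\alpha_n-\beta_n)$.

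For \eqref{ineq-xnp1-xn}, I would start from $x_{n+1}-x_n=\alpha_nx_n+\beta_nTx_n+r_n-x_n$ and collect the $x_n$ terms, writing
\[
x_{n+1}-x_n=\beta_n(Tx_n-x_n)-(1-\alpha_n-\beta_n)x_n+r_n.
\]
The triangle inequality then immediately yields $\|x_{n+1}-x_n\|\le\beta_n\|x_n-Tx_n\|+(1-\alpha_n-\beta_n)\|x_n\|+\|r_n\|$, which is \eqref{ineq-xnp1-xn}.

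For \eqref{ineq-xnp1-Txnp1}, the key idea is to pivot through $Tx_n$: by the triangle inequality and nonexpansiveness,
\[
\|x_{n+1}-Tx_{n+1}\|\le\|x_{n+1}-Tx_n\|+\|Tx_n-Tx_{n+1}\|\le\|x_{n+1}-Tx_n\|+\|x_{n+1}-x_n\|.
\]
The second summand is controlled by the freshly proved \eqref{ineq-xnp1-xn}. For the first, I would expand $x_{n+1}-Tx_n=\alpha_nx_n+(\beta_n-1)Tx_n+r_n$ and, by inserting $\pm x_n$ inside the $Tx_n$ term, rewrite it as
\[
x_{n+1}-Tx_n=(1-\beta_n)(x_n-Tx_n)-(1-\alpha_n-\beta_n)x_n+r_n,
\]
so that $\|x_{n+1}-Tx_n\|\le(1-\beta_n)\|x_n-Tx_n\|+(1-\alpha_n-\beta_n)\|x_n\|+\|r_n\|$. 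Adding the two bounds, the coefficients of $\|x_n-Tx_n\|$ are $1-\beta_n$ and $\beta_n$, which sum to exactly $1$, while the $(1-\alpha_n-\beta_n)\|x_n\|$ and $\|r_n\|$ contributions each appear twice, giving precisely \eqref{ineq-xnp1-Txnp1}.

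The computation is entirely routine; the only point requiring care — and the one place where the argument could go wrong — is the bookkeeping of coefficients in the two regroupings, in particular ensuring that the coefficient of $\|x_n-Tx_n\|$ collapses to $1$ after summation. Indeed, the whole design of the decomposition of $x_{n+1}-Tx_n$ is chosen so that its $\|x_n-Tx_n\|$-coefficient $1-\beta_n$ exactly complements the $\beta_n$ coming from \eqref{ineq-xnp1-xn}.
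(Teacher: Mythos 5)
Your proposal is correct and follows essentially the same route as the paper: the identity $x_{n+1}-x_n=\beta_n(Tx_n-x_n)-(1-\alpha_n-\beta_n)x_n+r_n$ for \eqref{ineq-xnp1-xn}, and for \eqref{ineq-xnp1-Txnp1} the same pivot through $Tx_n$ with nonexpansiveness applied to $\|Tx_{n+1}-Tx_n\|$ and \eqref{ineq-xnp1-xn} applied to $\|x_{n+1}-x_n\|$ (the paper merely writes your two triangle-inequality steps as one combined decomposition of $Tx_{n+1}-x_{n+1}$). Your coefficient bookkeeping, including the collapse $(1-\beta_n)+\beta_n=1$, matches the paper's computation exactly.
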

\begin{proof}
Let $n\in \N$. We get that
\begin{align*}
\|x_{n+1}-x_n\| &=  \|\beta_n(Tx_n-x_n)-(1-\alpha_n-\beta_n)x_n + r_n\|\\
&\le  \beta_n\|x_n-Tx_n\|+(1-\alpha_n-\beta_n)\|x_n\| + \|r_n\|
\end{align*}
and 
\begin{align*}
\|Tx_{n+1}-x_{n+1}\| &= \|Tx_{n+1}-Tx_n+(1-\beta_n)(Tx_n-x_n)+(1-\alpha_n-\beta_n)x_n - r_n\|\\
&\le \|Tx_{n+1}-Tx_n\|+(1-\beta_n)\|Tx_n-x_n\|+ (1-\alpha_n-\beta_n)\|x_n\| + \|r_n\| \\
&\le \|x_{n+1}-x_n\|+(1-\beta_n)\|Tx_n-x_n\|+ (1-\alpha_n-\beta_n)\|x_n\| + \|r_n\| \\
&\stackrel{\eqref{ineq-xnp1-xn}}{\le}  \|x_n-Tx_n\| + 2(1-\alpha_n-\beta_n)\|x_n\| + 2\|r_n\|.
\end{align*}
\end{proof}

\subsection{A useful lemma in uniformly convex normed spaces}

Let us recall that a normed space $X$ is uniformly convex if for all $\varepsilon \in (0,2]$ there 
exists $\delta\in(0,1]$  such that for  all  $x,y\in X$, 
\begin{equation}
\| x\| \le 1, \quad \|y\|\le 1 \text{ and }  \|x-y\|\geq \varepsilon  \text{ imply } 
\left\|\frac12(x+y)\right\|\leq  1-\delta. \label{uc-normed-def}
\end{equation}
A modulus of uniform convexity of $X$ is a mapping $\eta:(0,2]\to (0,1]$ providing a 
$\delta:=\eta(\varepsilon)$ satisfying \eqref{uc-normed-def} for given $\varepsilon\in(0,2]$. 
Thus, $X$ is uniformly convex if and only if $X$ has a modulus of uniform convexity $\eta$. 

As pointed out in \cite[p. 3457]{KohLeu12} (see also \cite[p. 63]{LinTza79}), the uniformly convex 
Banach spaces $L_p$ ($1<p<\infty$) have a modulus of uniform convexity $\eta_p$ given by
\begin{align*}
\eta_p(\varepsilon)  & = \begin{cases} 
\displaystyle \frac{(p-1)\varepsilon^2}8 &  \text{if $1<p<2$},\\[2mm]
\displaystyle \frac{\varepsilon^p}{p\cdot 2^p} & \text{if $2\le p <\infty$}. 
\end{cases} 
\end{align*}
Thus, $\displaystyle \eta(\varepsilon)=\frac{\varepsilon^2}{8}$ is a modulus of uniform convexity 
for a Hilbert space. \\

In the sequel, we assume that $X$ is uniformly convex and $\eta$ is a modulus of uniform convexity 
of $X$. 

\begin{lemma}\label{uc-normed-def-modulus-r-lambda}
Let $\varepsilon \in (0,2]$, $r>0$ and $a,x,y\in X$ be such that 
$$\| x-a\| \le r,\, \|y-a\|\le r \ \text{and} \ \|x-y\|\geq \varepsilon r.$$
Then for all $\lambda \in [0,1]$,
$$
\|(1-\lambda)x + \lambda y- a\|  \le (1-2\lambda(1-\lambda)\eta(\varepsilon))r. 
$$
\end{lemma}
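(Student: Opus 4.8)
The plan is to reduce the statement to the defining property of uniform convexity \eqref{uc-normed-def}. First I would translate by $a$, setting $u = x - a$ and $v = y - a$, so that $(1-\lambda)x + \lambda y - a = (1-\lambda)u + \lambda v$ and the hypotheses become $\|u\| \le r$, $\|v\| \le r$, and $\|u - v\| \ge \varepsilon r$. Dividing by $r$ and applying \eqref{uc-normed-def} with modulus $\eta$ to the normalized vectors $u/r$ and $v/r$ (which have norm at most $1$ and differ by at least $\varepsilon$) yields the midpoint estimate
$$\left\| \tfrac12(u+v) \right\| \le (1 - \eta(\varepsilon))\, r.$$

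This settles the case $\lambda = \tfrac12$; the remaining work is to propagate it to an arbitrary $\lambda \in [0,1]$. Here the key observation is that $(1-\lambda)u + \lambda v$ can be rewritten as a convex combination of the midpoint $\tfrac12(u+v)$ and one of the endpoints. For $\lambda \in [0, \tfrac12]$ I would use
$$(1-\lambda)u + \lambda v = (1 - 2\lambda)\, u + 2\lambda \cdot \tfrac12(u+v),$$
and for $\lambda \in [\tfrac12, 1]$ the symmetric decomposition
$$(1-\lambda)u + \lambda v = (2\lambda - 1)\, v + (2 - 2\lambda)\cdot \tfrac12(u+v);$$
in both cases the right-hand side is a genuine convex combination, since the coefficients are nonnegative and sum to $1$. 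Applying the triangle inequality together with the midpoint estimate and $\|u\|, \|v\| \le r$, the first case gives the bound $(1 - 2\lambda\,\eta(\varepsilon))\,r$ and the second $(1 - 2(1-\lambda)\,\eta(\varepsilon))\,r$.

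Finally I would check that each of these dominates the claimed bound: since $1 - \lambda \le 1$ on $[0,\tfrac12]$ we have $2\lambda\,\eta(\varepsilon) \ge 2\lambda(1-\lambda)\,\eta(\varepsilon)$, and since $\lambda \le 1$ on $[\tfrac12,1]$ we have $2(1-\lambda)\,\eta(\varepsilon) \ge 2\lambda(1-\lambda)\,\eta(\varepsilon)$; hence in either case $\|(1-\lambda)u + \lambda v\| \le (1 - 2\lambda(1-\lambda)\,\eta(\varepsilon))\,r$, as required. The only real subtlety is the convex decomposition through the midpoint, which converts the single midpoint inequality supplied by uniform convexity into the full one-parameter family; the rest is routine triangle-inequality bookkeeping, and the symmetric coefficient $2\lambda(1-\lambda)$ in the statement arises simply as a common weakening of the two one-sided estimates, giving a single clean bound valid for all $\lambda \in [0,1]$.
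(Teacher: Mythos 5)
Your proof is correct, but it takes a different route from the paper. The paper's proof is a one-line citation: it invokes Lemma~3.3 of Kohlenbach's paper \cite{Koh03} (the unit-ball version of exactly this statement) and obtains the result by the substitution $x:=\frac1r(y-a)$, $y:=\frac1r(x-a)$, i.e.\ it outsources the general-$\lambda$ estimate to the literature and only supplies the translation/rescaling. You instead prove the lemma from scratch out of the midpoint definition \eqref{uc-normed-def}: after the same normalization, you get the midpoint bound $\left\|\frac12(u+v)\right\|\le(1-\eta(\varepsilon))r$, and then propagate it to arbitrary $\lambda$ by writing $(1-\lambda)u+\lambda v$ as a convex combination of the midpoint with the nearer endpoint. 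All your algebra checks out: the two decompositions are genuine convex combinations on their respective ranges, the triangle inequality yields $\bigl(1-2\min\{\lambda,1-\lambda\}\,\eta(\varepsilon)\bigr)r$, and since $\min\{\lambda,1-\lambda\}\ge\lambda(1-\lambda)$ on $[0,1]$ this dominates the stated bound --- in fact your intermediate estimate is slightly sharper than the symmetric $2\lambda(1-\lambda)$ form, which is just the convenient common weakening. What each approach buys: the paper's version is shorter and defers to a known result (whose proof, due essentially to Groetsch, uses the same decomposition idea you rediscovered), while yours makes the lemma self-contained, requiring nothing beyond the definition of the modulus $\eta$, and makes transparent where the factor $2\lambda(1-\lambda)$ comes from.
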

\begin{proof} 
Apply \cite[Lemma 3.3]{Koh03} with $x:=\frac1r(y-a)$ and $y:=\frac1r(x-a)$. 
\end{proof}

The following lemma is the main tool in proving the results from Subsection~\ref{section-modulus-liminf}. 

\begin{lemma}\label{main-lemma-xn-uc}
Let $z$ be a fixed point of $T$ and $n\in\N$ be such that  $\alpha_n+\beta_n >0$.
\begin{enumerate}
\item\label{main-lemma-xn-uc-basic} If $\gamma, \delta, \theta>0$ satisfy
\begin{center}
 $\gamma \leq \|x_n-z\|\leq \delta$ and $\|x_n-Tx_n\|\geq \theta$, 
\end{center}
then 
$$ \|\alpha_nx_n+\beta_nTx_n-(\alpha_n+\beta_n)z\| \leq \|x_n-z\| -
\frac{2\gamma\alpha_n\beta_n}{\alpha_n+\beta_n}\eta\left(\frac{\theta}{\delta}\right).$$
\item\label{main-lemma-xn-uc-special} Assume that $\eta$ can be written as $\eta(\varepsilon)=\varepsilon\cdot\tilde{\eta}(\varepsilon)$ 
with $\tilde{\eta}$ increasing.  If $\delta^*,\theta >0$ are such that 
\begin{center}
 $\|x_n-z\|\leq \delta^*$ and $\|x_n-Tx_n\|\geq \theta$, 
\end{center}
then
$$ \|\alpha_nx_n+\beta_nTx_n-(\alpha_n+\beta_n)z\| \leq \|x_n-z\| -
\frac{2\theta\alpha_n\beta_n}{\alpha_n+\beta_n}\tilde{\eta}\left(\frac{\theta}{\delta^*}\right).$$
\end{enumerate}
\end{lemma}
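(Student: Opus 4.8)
The plan is to recognize the left-hand side as a rescaled convex combination and to feed it directly into Lemma~\ref{uc-normed-def-modulus-r-lambda}. Writing $\lambda=\frac{\beta_n}{\alpha_n+\beta_n}$ (legitimate since $\alpha_n+\beta_n>0$), one has the algebraic identity $\alpha_nx_n+\beta_nTx_n-(\alpha_n+\beta_n)z=(\alpha_n+\beta_n)\bigl((1-\lambda)x_n+\lambda Tx_n-z\bigr)$, so it suffices to estimate the convex combination $(1-\lambda)x_n+\lambda Tx_n$ against $z$. I would apply Lemma~\ref{uc-normed-def-modulus-r-lambda} with $x:=x_n$, $y:=Tx_n$, $a:=z$ and $r:=\|x_n-z\|$. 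The two distance hypotheses hold because $\|x_n-z\|\le r$ trivially and, since $z\in Fix(T)$ and $T$ is nonexpansive, $\|Tx_n-z\|=\|Tx_n-Tz\|\le\|x_n-z\|=r$. Note $r>0$ in both parts: in (i) because $r\ge\gamma>0$, and in (ii) because $\|x_n-z\|=0$ would force $x_n=z$ and hence $\|x_n-Tx_n\|=0<\theta$.

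For part (i) I would take $\varepsilon:=\frac{\theta}{\delta}$. This lies in $(0,2]$: it is positive, and Corollary~\ref{cor-xn-Txn-xn-z-fixed-point} gives $\theta\le\|x_n-Tx_n\|\le 2\|x_n-z\|\le 2\delta$, whence $\varepsilon\le 2$. The third hypothesis $\|x_n-Tx_n\|\ge\varepsilon r$ follows from $\varepsilon r=\frac{\theta}{\delta}\|x_n-z\|\le\theta\le\|x_n-Tx_n\|$, using $\|x_n-z\|\le\delta$. Lemma~\ref{uc-normed-def-modulus-r-lambda} then yields $\|(1-\lambda)x_n+\lambda Tx_n-z\|\le\bigl(1-\frac{2\alpha_n\beta_n}{(\alpha_n+\beta_n)^2}\eta(\theta/\delta)\bigr)\|x_n-z\|$, since $2\lambda(1-\lambda)=\frac{2\alpha_n\beta_n}{(\alpha_n+\beta_n)^2}$. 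Multiplying through by $\alpha_n+\beta_n$ and expanding gives $\|\alpha_nx_n+\beta_nTx_n-(\alpha_n+\beta_n)z\|\le(\alpha_n+\beta_n)\|x_n-z\|-\frac{2\alpha_n\beta_n}{\alpha_n+\beta_n}\eta(\theta/\delta)\|x_n-z\|$. I then finish with the two elementary estimates $\alpha_n+\beta_n\le 1$ (to replace the first term by $\|x_n-z\|$) and $\|x_n-z\|\ge\gamma$ (to bound the subtracted term from below), which is exactly where the lower bound $\gamma$ enters.

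For part (ii) the trick is to choose $\varepsilon$ equal to the exact ratio $\varepsilon:=\frac{\|x_n-Tx_n\|}{\|x_n-z\|}$, again in $(0,2]$ by Corollary~\ref{cor-xn-Txn-xn-z-fixed-point}, so that $\|x_n-Tx_n\|=\varepsilon r$ holds with equality and the third hypothesis is automatic. Proceeding as before and using the factorization $\eta(\varepsilon)=\varepsilon\tilde{\eta}(\varepsilon)$, the crucial cancellation $\eta(\varepsilon)\|x_n-z\|=\varepsilon\,\tilde{\eta}(\varepsilon)\|x_n-z\|=\|x_n-Tx_n\|\,\tilde{\eta}(\varepsilon)$ removes the dependence on $\|x_n-z\|$ that forced the auxiliary bound $\gamma$ in part (i). It remains to bound $\|x_n-Tx_n\|\,\tilde{\eta}(\varepsilon)$ from below by $\theta\,\tilde{\eta}(\theta/\delta^*)$: this uses $\|x_n-Tx_n\|\ge\theta$ together with $\varepsilon=\frac{\|x_n-Tx_n\|}{\|x_n-z\|}\ge\frac{\theta}{\delta^*}$ (from $\|x_n-z\|\le\delta^*$) and the monotonicity of $\tilde{\eta}$. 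One more application of $\alpha_n+\beta_n\le 1$ closes the argument.

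I expect the argument to be essentially mechanical once the reformulation $\alpha_nx_n+\beta_nTx_n-(\alpha_n+\beta_n)z=(\alpha_n+\beta_n)\bigl((1-\lambda)x_n+\lambda Tx_n-z\bigr)$ is in place; the only genuinely delicate points are the verifications that $\varepsilon\in(0,2]$ (both relying on Corollary~\ref{cor-xn-Txn-xn-z-fixed-point}) and, in part (ii), the idea of selecting the exact ratio for $\varepsilon$ and exploiting the factorization of $\eta$ to eliminate the dependence on $\gamma$.
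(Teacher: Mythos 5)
Your proof is correct and takes essentially the same approach as the paper's: the same application of Lemma~\ref{uc-normed-def-modulus-r-lambda} with $\lambda=\frac{\beta_n}{\alpha_n+\beta_n}$ and $\varepsilon=\frac{\theta}{\delta}$, merely rescaling by $\alpha_n+\beta_n$ after invoking the lemma instead of before. The only cosmetic difference is in part (ii), where you re-apply the base lemma with the exact ratio $\varepsilon=\frac{\|x_n-Tx_n\|}{\|x_n-z\|}$, while the paper instantiates part (i) at $\gamma=\delta=\|x_n-z\|$ (i.e., $\varepsilon=\frac{\theta}{\|x_n-z\|}$); both exploit the same cancellation via $\eta(\varepsilon)=\varepsilon\tilde{\eta}(\varepsilon)$ and the monotonicity of $\tilde{\eta}$, and give the identical bound.
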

\begin{proof}
\begin{enumerate}
\item  We have that 
\begin{align*}
\alpha_nx_n+\beta_nTx_n & =\frac{\alpha_n}{\alpha_n+\beta_n}(\alpha_n+\beta_n)x_n + 
\frac{\beta_n}{\alpha_n+\beta_n}(\alpha_n+\beta_n)Tx_n,\\
\|(\alpha_n+\beta_n)x_n-(\alpha_n+\beta_n)z\| & =  (\alpha_n+\beta_n)\|x_n-z\|, \\
\|(\alpha_n+\beta_n)Tx_n-(\alpha_n+\beta_n)z\| & = (\alpha_n+\beta_n)\|Tx_n-z\|
 = (\alpha_n+\beta_n)\|Tx_n-Tz\| \\
& \le (\alpha_n+\beta_n)\|x_n-z\|, \\
\|(\alpha_n+\beta_n)x_n-(\alpha_n+\beta_n)Tx_n\| & =(\alpha_n+\beta_n)\|x_n-Tx_n\| 
\ge (\alpha_n+\beta_n)\theta \\
&  = (\alpha_n+\beta_n)\|x_n-z\|\frac{\theta}{\|x_n-z\|}\\
& \ge (\alpha_n+\beta_n)\|x_n-z\|\frac{\theta}{\delta}.
\end{align*}
Obviously, $\frac{\|x_n-Tx_n\|}{\|x_n-z\|}>0$ and, by Corollary \ref{cor-xn-Txn-xn-z-fixed-point}, $\frac{\|x_n-Tx_n\|}{\|x_n-z\|}\leq 2$.

We can apply Lemma \ref{uc-normed-def-modulus-r-lambda} with 
\begin{eqnarray*}
x:=(\alpha_n+\beta_n)x_n,  & y:=(\alpha_n+\beta_n)Tx_n, & a:=(\alpha_n+\beta_n)z, \\
r:=(\alpha_n+\beta_n)\|x_n-z\|, &  \displaystyle \lambda:=\frac{\beta_n}{\alpha_n+\beta_n},  
& \varepsilon:=\frac{\theta}{\delta}
\end{eqnarray*}
to get that 
\begin{align*}
\|\alpha_nx_n+\beta_nTx_n-(\alpha_n+\beta_n)z\| &\le 
\left(1-2\frac{\alpha_n\beta_n}{(\alpha_n+\beta_n)^2}\eta\left(\frac{\theta}{\delta}\right)\right)
(\alpha_n+\beta_n)\|x_n-z\|\\
& =  (\alpha_n+\beta_n)\|x_n-z\|-\frac{2\alpha_n\beta_n}{\alpha_n+\beta_n}\eta\left(\frac{\theta}{\delta}\right)\|x_n-z\|\\
& \leq \|x_n-z\| -\frac{2\alpha_n\beta_n}{\alpha_n+\beta_n}\eta\left(\frac{\theta}{\delta}\right)\gamma.
\end{align*}
\item Remark first that, by Corollary \ref{cor-xn-Txn-xn-z-fixed-point} and hypothesis, we have that 
$\|x_n-z\| \geq \frac{\|x_n-Tx_n\|}2 \geq \frac{\theta}2>0$.

Apply \eqref{main-lemma-xn-uc-basic} with $\gamma:=\delta:=\|x_n-z\|$ to get that 
\begin{align*}
 \|\alpha_nx_n+\beta_nTx_n-(\alpha_n+\beta_n)z\| & \leq    \|x_n-z\| -
\frac{2\|x_n-z\|\alpha_n\beta_n}{\alpha_n+\beta_n}\eta\left(\frac{\theta}{\|x_n-z\|}\right)\\
& =  \|x_n-z\| -
\frac{2\theta\alpha_n\beta_n}{\alpha_n+\beta_n} \tilde{\eta}\left(\frac{\theta}{\|x_n-z\|}\right)\\
& \leq   \|x_n-z\| -\frac{2\theta\alpha_n\beta_n}{\alpha_n+\beta_n} 
\tilde{\eta}\left(\frac{\theta}{\delta^*}\right),
\end{align*}
as $\frac{\theta}{\delta^*}\leq \frac{\theta}{\|x_n-z\|}$, so 
$\tilde{\eta}\left(\frac{\theta}{\delta^*}\right)\leq \tilde{\eta}\left(\frac{\theta}{\|x_n-z\|}\right)$, since 
$\tilde{\eta}$ is increasing.
\end{enumerate}
\end{proof}

\section{Quantitative notions, lemmas and hypotheses}

Firstly, let us recall the quantitative notions used in the paper. Let $(a_n)_{n \in \N} \subseteq \R$ be a 
sequence  of real numbers. We say that a function $\varphi:\N\to\N$ is 
\begin{enumerate}
\item  a rate of convergence of $(a_n)$ towards $a\in \R$ if for all $k\in\N$,
\[|a_n-a|\leq \frac1{k+1} \quad \text{holds for all~}n\geq \varphi(k).\]
\item a Cauchy modulus of $(a_n)$ if for all $k\in\N$,
\[ |a_{n+p} - a_n|\leq \frac1{k+1} \quad \text{holds for all~} n\geq \varphi(k) \text{~and 
all~}p\in\N.\]
\end{enumerate}
If the series $\sum\limits_{n=0}^\infty a_n$  converges, then a Cauchy modulus of the series is a 
Cauchy modulus of the sequence $\left(\tilde{a}_n=\sum\limits_{i=0}^n a_i\right)$
of partial sums. Furthermore, if $\sum\limits_{n=0}^\infty a_n$ diverges, then  $\theta:\N\to\N$ 
is a rate of divergence of the series if $\sum\limits_{i=0}^{\theta(k)} a_i \geq k$ for all $k\in\N$.

The following useful lemmas are immediate. 

\begin{lemma}\label{lemma-series-conv-upper-bound}
Assume that $(a_n) \subseteq \R_+$ and that $\sum\limits_{n=0}^\infty a_n$ converges with Cauchy modulus 
$\varphi$. Then 
\begin{enumerate} \vspace*{-4mm}
\item\label{lemma-series-upper-bound} $\sum\limits_{n=0}^\infty a_n \leq M$, where $M\in\N^*$ is such that 
$M\geq \sum\limits_{i=0}^{\varphi(0)} a_i + 1$.  
\item\label{lemma-series-conv} $\lim\limits_{n\to\infty} a_n=0$ with rate of 
convergence $\psi(k)=\varphi(k)+1$.
\end{enumerate} 
\end{lemma}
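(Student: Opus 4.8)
The plan is to prove the two claims of Lemma~\ref{lemma-series-conv-upper-bound} directly from the definition of a Cauchy modulus of the series, which by the convention stated just before the lemma means $\varphi$ is a Cauchy modulus of the sequence of partial sums $\tilde{a}_n=\sum_{i=0}^n a_i$. Since all terms $a_n$ are nonnegative, the sequence of partial sums is nondecreasing, and this monotonicity is what makes both statements fall out cleanly.

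For part~\eqref{lemma-series-upper-bound}, I would argue as follows. The defining property of the Cauchy modulus with $k=0$ gives $|\tilde{a}_{n+p}-\tilde{a}_n|\le 1$ for all $n\ge\varphi(0)$ and all $p\in\N$. Specializing to $n=\varphi(0)$ yields $\tilde{a}_{\varphi(0)+p}\le \tilde{a}_{\varphi(0)}+1$ for every $p$ (the absolute value is superfluous because the partial sums increase). Letting $p\to\infty$, the left-hand side tends to $\sum_{n=0}^\infty a_n$, so $\sum_{n=0}^\infty a_n \le \tilde{a}_{\varphi(0)}+1 = \sum_{i=0}^{\varphi(0)} a_i + 1 \le M$, using the hypothesis on $M$. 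Strictly speaking one only needs that $\tilde{a}_{\varphi(0)+p}$ is bounded above by $\sum_{i=0}^{\varphi(0)}a_i+1$ for all $p$ together with $\tilde a_n\le \tilde a_{\varphi(0)+p}$ for large enough $p$, so the supremum of the partial sums is at most $M$.

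For part~\eqref{lemma-series-conv}, I would show that $\psi(k)=\varphi(k)+1$ is a rate of convergence of $(a_n)$ to $0$. Fix $k\in\N$ and take $n\ge\psi(k)=\varphi(k)+1$. Write $a_n=\tilde{a}_n-\tilde{a}_{n-1}$, and observe that since $n-1\ge\varphi(k)$, applying the Cauchy-modulus property at index $n-1$ with $p=1$ gives $|a_n| = |\tilde{a}_n-\tilde{a}_{n-1}|\le \frac1{k+1}$. As $a_n\ge 0$ this is exactly $|a_n-0|\le\frac1{k+1}$ for all $n\ge\psi(k)$, which is the required rate.

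Neither part presents a genuine obstacle: the only subtlety is the limiting step in part~\eqref{lemma-series-upper-bound}, where one passes from a uniform bound on the partial sums to a bound on their limit, and the bookkeeping of index shifts ($n\ge\varphi(k)+1$ so that $n-1\ge\varphi(k)$) in part~\eqref{lemma-series-conv}. Both are routine, which is consistent with the paper's description of these lemmas as ``immediate.''
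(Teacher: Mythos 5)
Your proof is correct, and it is essentially the intended argument: part (\ref{lemma-series-upper-bound}) via the $k=0$ instance of the Cauchy-modulus property at $n=\varphi(0)$, using monotonicity of the partial sums to pass to the limit, and part (\ref{lemma-series-conv}) via $a_n=\tilde{a}_n-\tilde{a}_{n-1}$ with the index shift $n-1\geq\varphi(k)$ and $p=1$. The paper does not spell this out but instead cites \cite[Lemma 4.3(ii)]{FirLeu24}, whose proof is precisely this routine computation, so your write-up matches the paper's approach.
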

\begin{proof}
See the proof of \cite[Lemma 4.3(ii)]{FirLeu24}.
\end{proof}

\begin{lemma}\label{divergence-rate-geq-k} 
Assume that $a_n \in [0,1)$ for all $n\in\N$ and that $\sum\limits_{n=0}^\infty a_n$ diverges with
rate of divergence $\theta$. Then $\theta(n) \ge n$ for all $n\in\N$. 
\end{lemma}
\begin{proof}
Suppose, by contradiction, that for some $n\in\N$, we have that $\theta(n)<n$, hence $\theta(n)\leq n-1$.
Then $\sum\limits_{i=0}^{\theta(n)} a_i \le \sum\limits_{i=0}^{n-1} a_i < n$, which is
a contradiction.
\end{proof}

\begin{lemma}\label{cauchy-modulus-linear-comb}
Let $(a_n), (b_n) \subseteq \R$,  $s,t\in\N^*$ and $c_n=sa_n+tb_n$ for all $n\in\N$. 
Assume that $\varphi_1$ is a Cauchy modulus of $(a_n)$  and $\varphi_2$ is a Cauchy modulus of $(b_n)$.
Define 
\[ \varphi(k)=\max\{\varphi_1(2s(k+1)-1), \varphi_2(2t(k+1)-1)\}.\]
Then $\varphi$ is a Cauchy modulus of $(c_n)$.
\end{lemma}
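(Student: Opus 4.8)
The plan is to unfold the definition of a Cauchy modulus and reduce everything to the triangle inequality together with the hypotheses on $\varphi_1$ and $\varphi_2$. I would fix $k \in \N$, take an arbitrary $n \ge \varphi(k)$ and an arbitrary $p \in \N$, and aim to show $|c_{n+p} - c_n| \le \frac{1}{k+1}$. Since $c_n = s a_n + t b_n$, linearity and the triangle inequality immediately give
$$|c_{n+p} - c_n| \le s\,|a_{n+p} - a_n| + t\,|b_{n+p} - b_n|.$$

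The next step is to bound each of the two summands separately, splitting the allowed tolerance $\frac{1}{k+1}$ into two equal halves, one per sequence. Because $n \ge \varphi(k) \ge \varphi_1(2s(k+1)-1)$, the Cauchy modulus property of $(a_n)$, applied with index $2s(k+1)-1$, yields $|a_{n+p}-a_n| \le \frac{1}{2s(k+1)}$, and therefore $s\,|a_{n+p}-a_n| \le \frac{1}{2(k+1)}$; the factor $s$ is exactly what the choice of argument is designed to cancel. An entirely analogous argument, using $n \ge \varphi_2(2t(k+1)-1)$, gives $t\,|b_{n+p}-b_n| \le \frac{1}{2(k+1)}$. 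Adding the two bounds produces $\frac{1}{2(k+1)}+\frac{1}{2(k+1)} = \frac{1}{k+1}$, which is exactly the required estimate and closes the argument.

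There is no genuine obstacle here; the only thing one must get right is the bookkeeping of the indices. Each modulus has to be fed an argument large enough that, after multiplication by its coefficient, its contribution is at most half of the target. The prescriptions $2s(k+1)-1$ and $2t(k+1)-1$ are tailored precisely so that $\frac{1}{(2s(k+1)-1)+1} = \frac{1}{2s(k+1)}$ and the coefficient then cancels cleanly, and an off-by-one slip would spoil the final addition. The hypothesis $s,t\in\N^*$ is used only to ensure these arguments are bona fide natural numbers; everything else is a direct application of the triangle inequality.
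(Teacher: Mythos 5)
Your proof is correct, and it is the standard argument: the paper itself gives no details, simply deferring to \cite[Lemma 3.2]{FirLeu24}, whose proof proceeds exactly as you do --- triangle inequality, then feeding each modulus the argument $2s(k+1)-1$ (resp.\ $2t(k+1)-1$) so that the coefficient cancels and each term contributes $\frac{1}{2(k+1)}$. Your index bookkeeping, including the observation that $s,t\in\N^*$ guarantees the arguments are natural numbers, is accurate.
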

\begin{proof}
See the proof of \cite[Lemma 3.2]{FirLeu24}.
\end{proof}

\subsection{A quantitative lemma on sequences of nonnegative reals}

A modulus of liminf \cite{Leu14} for a sequence $(a_n)_{n \in \N} \subseteq \R_+$ is a mapping  
$\delta:\N\times\N\to\N$  satisfying the following: 
\begin{center}
for all $k, L\in\N$ there exists 
$N\in [L, \delta(k,L)]$ such that $a_N < \frac1{k+1}$.
\end{center}
Clearly, $\liminf\limits_{n\to\infty} a_n=0$ if and only if 
$(a_n)$ has a modulus of liminf. 

The following quantitative lemma is one of the main tools in the proof of our main result, 
Theorem \ref{main-thm-T-as-reg}.

\begin{lemma}\label{prop-rate-conv-modulus-liminf}
Assume that $\left(a_n\right)_{n \in \N},\left(b_n\right)_{n \in \N}\subseteq \R_+$ satisfy the following:
\begin{enumerate}
\item  $\liminf\limits_{n\to\infty} a_n=0$ with modulus of liminf $\delta$;
\item  $\sum\limits_{n=0}^\infty b_n$  converges with Cauchy modulus $\psi$;
\item  $ a_{n+1} \leq a_n + b_n$ for all $n\in\N$.
\end{enumerate} 
Then $\lim\limits_{n\to\infty} a_n=0$ with rate of convergence $\varphi$ defined by 
\begin{equation}
\varphi(k) = \delta\left(2k+1, \psi(2k+1)+1\right).
\end{equation}
\end{lemma}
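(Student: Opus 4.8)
The plan is to combine the three hypotheses to upgrade the $\liminf$ statement about $(a_n)$ into a genuine rate of convergence. The key observation is that hypothesis (iii), $a_{n+1}\leq a_n+b_n$, together with the convergence of $\sum b_n$, makes the sequence $(a_n)$ \emph{almost nonincreasing}: the tail sums of $(b_n)$ control how much $(a_n)$ can increase over any interval. More precisely, for any $m\leq n$ we have by telescoping the inequality that
\begin{equation*}
a_n \leq a_m + \sum_{i=m}^{n-1} b_i.
\end{equation*}
So if we can locate, near any target index, a point $N$ where $a_N$ is already small (from the modulus of liminf) and control the subsequent increase by the tail of the convergent series (from the Cauchy modulus), we can bound $a_n$ for all later $n$.

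First I would fix $k\in\N$ and set $N_0 := \psi(2k+1)+1$. The idea is to invoke the modulus of liminf $\delta$ at the precision parameter $2k+1$ with lower bound $N_0$: this yields some $N\in[N_0,\delta(2k+1,N_0)]$ with $a_N < \frac{1}{2(k+1)}$ — here I am using that $\frac{1}{(2k+1)+1}=\frac{1}{2k+2}=\frac{1}{2(k+1)}$. Note that $N\geq N_0 = \psi(2k+1)+1 > \psi(2k+1)$, which is exactly what is needed to apply the Cauchy modulus of the series at this same precision.

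Next I would estimate, for an arbitrary $n\geq \varphi(k)=\delta(2k+1,N_0)\geq N$, the quantity $a_n$. Using the telescoped inequality from index $N$ to $n$,
\begin{align*}
a_n &\leq a_N + \sum_{i=N}^{n-1} b_i.
\end{align*}
The first term is bounded by $\frac{1}{2(k+1)}$ by the choice of $N$. For the second term, since $\sum b_i$ converges with Cauchy modulus $\psi$ and $N>\psi(2k+1)$, the partial sum $\sum_{i=N}^{n-1} b_i = \tilde b_{n-1}-\tilde b_{N-1}$ (a difference of partial sums at indices both at least $N-1\geq\psi(2k+1)$) is at most $\frac{1}{2(k+1)}$; here I rely on the nonnegativity of the $b_i$ so that this tail equals the absolute difference appearing in the definition of Cauchy modulus. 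Combining, $a_n\leq \frac{1}{2(k+1)}+\frac{1}{2(k+1)}=\frac{1}{k+1}$, which establishes that $\varphi$ is a rate of convergence of $(a_n)$ towards $0$.

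The main obstacle — really the only subtle point — is aligning the two precision parameters so that both halves come out as $\frac{1}{2(k+1)}$ and the index produced by $\delta$ lies past the threshold $\psi$ requires. The choice of $2k+1$ inside both $\delta$ and $\psi$, together with the $+1$ shift that pushes $N$ strictly beyond $\psi(2k+1)$, is precisely engineered to make this work; verifying the telescoping inequality and the conversion of the tail sum into the Cauchy-modulus form are routine, relying only on hypothesis (iii) and on $b_i\geq 0$. No appeal to uniform convexity or to the iteration itself is needed here, since this is a purely quantitative lemma about real sequences.
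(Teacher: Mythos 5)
Your proposal is correct and follows essentially the same route as the paper's proof: the same telescoped inequality $a_n \leq a_N + \sum_{i=N}^{n-1} b_i$, the same invocation of the modulus of liminf at precision $2k+1$ with lower bound $\psi(2k+1)+1$ to get $N$ past the Cauchy threshold, and the same application of the Cauchy modulus to the difference of partial sums, yielding $\frac1{2(k+1)}+\frac1{2(k+1)}=\frac1{k+1}$. The only cosmetic difference is that the paper verifies the telescoping step by an explicit induction and rewrites the indices as $\psi_k+q_1$, $\psi_k+q_2$, which you handle equivalently.
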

\begin{proof}
Denote, for all $n\in\N$, $\tilde{b}_n=\sum\limits_{i=0}^{n}b_i$.
First, let us remark that we get, by induction on $m$, that for all $n,m\in\N$, 
$$ a_{n+m} \leq a_n + \sum\limits_{i=n}^{n+m-1}b_i.$$
Let $k\in\N$ and let us denote $\psi_k=\psi(2k+1)$. As $\delta$ is a modulus of liminf of 
$(a_n)$, there exists $N\in [\psi_k+1,\varphi(k)]$ 
and $a_N < \frac1{2(k+1)}$.

It follows that for all $n\geq \varphi(k)$,
\begin{align*}
a_n & \leq a_N + \sum\limits_{i=N}^{n-1}b_i 
= a_N + \sum\limits_{i=0}^{n-1}b_i - \sum\limits_{i=0}^{N-1}b_i 
= a_N + \tilde{b}_{n-1} - \tilde{b}_{N-1} \\
& = a_N + \tilde{b}_{\psi_k+ q_1} - \tilde{b}_{\psi_k+q_2}, \text{~where~} 
q_1=n-\psi_k-1, q_2= N-1-\psi_k\\
& < \frac1{2(k+1)}  + \tilde{b}_{\psi_k+ q_1} - \tilde{b}_{\psi_k+q_2}.
\end{align*}

As $n\geq \varphi(k) \geq N \geq \psi_k+1$, we have that $q_1,q_2\in\N$, 
so we can apply the fact that $\psi$ is a Cauchy modulus of $(\tilde{b}_n)$ to get that
\begin{align*}
a_n &  < \frac1{2(k+1)} + \frac1{2(k+1)}=\frac1{k+1}
\end{align*}
\end{proof}

\subsection{Quantitative hypotheses on the parameter sequences}

We consider the following quantitative hypotheses on the parameter sequences $(\alpha_n)$, 
$(\beta_n)$, $(r_n)$:
\begin{align*}
(C1) & \quad \sum\limits_{n=0}^{\infty} (1-\alpha_n-\beta_n) < \infty \text{~with Cauchy modulus~}  
\sigma_1, \\
(C2) & \quad \alpha_n+\beta_n >0 \text{~for all $n\in\N$ and~} \sum\limits_{n=0}^{\infty} \frac{\alpha_n\beta_n}{\alpha_n+\beta_n}=\infty 
\text{~with rate of divergence~} \sigma_2,\\
(C3) & \quad \sum\limits_{n=0}^{\infty} \|r_n\| < \infty \text{~with Cauchy modulus~}  \sigma_3.
\end{align*}

Assume that $(C1)$ and $(C3)$ hold. Then the series $\sum\limits_{n=0}^{\infty} (1-\alpha_n-\beta_n)$, 
$\sum\limits_{n=0}^{\infty} \|r_n\|$ are bounded. 

In the sequel, $\Mabn, \Mrn \in\N$ are such that 
\begin{equation}\label{Mabn-Mrn-upper-bounds-series}
\sum\limits_{n=0}^\infty (1-\alpha_n-\beta_n) \leq \Mabn \quad \text{and} \quad 
\sum\limits_{n=0}^\infty r_n \leq \Mrn.
\end{equation}
If $\alpha_n+\beta_n=1$ for all $n\in\N$, then  $\Mabn=0$. Similarly, if 
$r_n=0$ for all $n\in\N$, then  $\Mrn=0$. Otherwise, by 
Lemma~\ref{lemma-series-conv-upper-bound}, we can take 
\begin{equation}\label{Mabn-Mrn-upper-bounds-series-used}
\Mabn \ge\left\lceil\sum\limits_{n=0}^{\sigma_1(0)}(1-\alpha_i-\beta_i)\right\rceil+1
\quad \text{and} \quad 
\Mrn \ge\left\lceil\sum\limits_{n=0}^{\sigma_3(0)}\|r_n\|\right\rceil+1.
\end{equation}

\begin{lemma}\label{lemma-upper-bound-xnz}
Assume that $(C1)$ and $(C3)$ hold and that $z\in Fix(T)$. Then for all $n\in\N$, 
$$
\|x_n-z\| \leq \|x-z\| +  \Mabn\|z\| + \Mrn. 
$$
\end{lemma}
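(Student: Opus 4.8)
The plan is to bound $\|x_n-z\|$ by the quantity $K_{z,n}$ supplied by the first lemma (inequality~\eqref{ineq-xn-z-Kzn}) and then to simplify $K_{z,n}$ using that $z$ is a fixed point together with the summability hypotheses $(C1)$ and $(C3)$. First I would recall the definition
\[
K_{z,n} = \|x-z\| + \sum_{i=0}^{n-1}\beta_i\|Tz-z\| + \sum_{i=0}^{n-1}(1-\alpha_i-\beta_i)\|z\| + \sum_{i=0}^{n-1}\|r_i\|
\]
and observe immediately that since $z\in Fix(T)$ we have $Tz=z$, so the first sum vanishes: $\|Tz-z\|=0$. This is the one place where the fixed-point hypothesis is used, and it removes the only term involving the (otherwise uncontrolled) factor $\sum\beta_i$.

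Next I would bound the two remaining sums by their infinite-series values and then by the constants $\Mabn$ and $\Mrn$. Concretely, since all summands are nonnegative, the partial sums are dominated by the full series, so for every $n$,
\[
\sum_{i=0}^{n-1}(1-\alpha_i-\beta_i)\|z\| \le \|z\|\sum_{i=0}^{\infty}(1-\alpha_i-\beta_i) \le \Mabn\|z\|
\qquad\text{and}\qquad
\sum_{i=0}^{n-1}\|r_i\| \le \sum_{i=0}^{\infty}\|r_i\| \le \Mrn,
\]
where the final inequalities are exactly the defining bounds~\eqref{Mabn-Mrn-upper-bounds-series} guaranteed by $(C1)$ and $(C3)$ (via Lemma~\ref{lemma-series-conv-upper-bound}\ref{lemma-series-upper-bound}). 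Combining these with $\|x_n-z\|\le K_{z,n}$ from~\eqref{ineq-xn-z-Kzn} yields
\[
\|x_n-z\| \le K_{z,n} = \|x-z\| + 0 + \sum_{i=0}^{n-1}(1-\alpha_i-\beta_i)\|z\| + \sum_{i=0}^{n-1}\|r_i\| \le \|x-z\| + \Mabn\|z\| + \Mrn,
\]
which is the desired bound.

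I do not expect any genuine obstacle here: the lemma is a direct book-keeping consequence of already-established facts. The only points requiring the slightest care are (i) using $z\in Fix(T)$ at the right moment to kill the $\|Tz-z\|$ term, and (ii) noting that nonnegativity of the summands is what lets a finite partial sum be replaced by the infinite series and hence by the uniform constants $\Mabn,\Mrn$. One small edge case worth mentioning is $n=0$, where $K_{z,0}=\|x-z\|$ and the inequality holds trivially since $\Mabn\|z\|+\Mrn\ge 0$; the general estimate above already covers this, so no separate argument is needed.
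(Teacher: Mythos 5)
Your proposal is correct and is essentially the paper's own proof spelled out in full: the paper's one-line argument cites exactly the same ingredients, namely the definition of $K_{z,n}$ (with the $\|Tz-z\|$ term vanishing since $z\in Fix(T)$), the bound \eqref{ineq-xn-z-Kzn}, and the series bounds \eqref{Mabn-Mrn-upper-bounds-series}. No differences worth noting.
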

\begin{proof}
By  the definition of $K_{z,n}$, \eqref{ineq-xn-z-Kzn} and 
\eqref{Mabn-Mrn-upper-bounds-series}.
\end{proof}

\section{Main results}

The main result of the paper  computes uniform rates of ($T$-)asymptotic regularity of the generalized Krasnoselkii-Mann-type iteration. 

\begin{theorem}\label{main-thm-T-as-reg}
Let $X$ be  a uniformly convex normed space, $\eta$ a modulus of uniform convexity of $X$, $T:X\to X$  a nonexpansive 
mapping such that $Fix(T)\neq\emptyset$, and $(x_n)$ be given by 
$$
x_0=x\in X, \quad x_{n+1}=\alpha_nx_n+\beta_nTx_n + r_n,
$$
where  $(r_n) \subseteq X$  and $(\alpha_n),(\beta_n) \subseteq [0,1]$ are such 
that $\alpha_n+\beta_n \le 1$ for all $n\in\N$. \\
Suppose that (C1), (C2), (C3) hold, $\Mabn,\Mrn\in \N$ satisfy \eqref{Mabn-Mrn-upper-bounds-series} and 
$\bxz,\Mxnz,\Mxn\in\N^*$  are such that 
\begin{equation}\label{def-bxz-bz-Mxnz}
\bxz\geq \max\{\|x-z\|, \|z\|\}, \quad \Mxnz = \bxz + \Mabn \bxz + \Mrn, \quad \Mxn = \Mxnz + \bxz
\end{equation}
for some fixed point $z$ of $T$.

Define $\Phi,\Psi: \N\to \N$ by 
\begin{align}
\Phi(k) & = \sigma_2\big(\defP(2k+1)+\max\{\sigma_1(8\Mxn(k+1)-1), \sigma_3(8k+7)\}+1\big) 
\label{def-Phi-rate-T-as-reg},\\
\Psi(k) & =  \Phi(2k+1), \label{def-Psi-rate-as-reg}
\end{align}
where 
\begin{equation}\label{def-Pk}
\defP: \N\to \N, \quad \defP(k)=\left\lceil\frac{ (\Mxnz + \Mabn \bxz + \Mrn + 1)(k+1)}{\eta\left(\frac{1}{\Mxnz(k+1)}\right)}
\right\rceil.
\end{equation}

Then $(x_n)$ is $T$-asymptotically regular with rate $\Phi$ and asymptotically regular with rate  $\Psi$.
\end{theorem}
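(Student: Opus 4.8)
The plan is to recognize that the whole statement follows by applying Lemma~\ref{prop-rate-conv-modulus-liminf} to the sequence $a_n:=\|x_n-Tx_n\|$, and then to read off $\Psi$ from $\Phi$ using the crude estimate \eqref{ineq-xnp1-xn}. First I would fix the fixed point $z$ from \eqref{def-bxz-bz-Mxnz} and record two boundedness facts that make every constant explicit: Lemma~\ref{lemma-upper-bound-xnz} together with \eqref{def-bxz-bz-Mxnz} gives $\|x_n-z\|\le\Mxnz$ for all $n$, hence $\|x_n\|\le\|x_n-z\|+\|z\|\le\Mxnz+\bxz=\Mxn$. Next comes the per-step ``quasi-Fej\'er with gain'' inequality. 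Writing $x_{n+1}-z=\big(\alpha_nx_n+\beta_nTx_n-(\alpha_n+\beta_n)z\big)+r_n-(1-\alpha_n-\beta_n)z$ and bounding the bracket by Lemma~\ref{main-lemma-xn-uc}\eqref{main-lemma-xn-uc-basic} (legitimate since $\alpha_n+\beta_n>0$ by (C2)): for fixed $k$ and any $n$ with $\|x_n-Tx_n\|\ge\theta:=\frac1{k+1}$, Corollary~\ref{cor-xn-Txn-xn-z-fixed-point} yields $\|x_n-z\|\ge\theta/2$, so applying the lemma with $\gamma=\theta/2$ and $\delta=\Mxnz$ produces
\[
\|x_{n+1}-z\|\le\|x_n-z\|-\frac1{k+1}\,\eta\!\left(\tfrac1{\Mxnz(k+1)}\right)\frac{\alpha_n\beta_n}{\alpha_n+\beta_n}+\|r_n\|+(1-\alpha_n-\beta_n)\|z\|.
\]

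The heart of the argument is turning this into the modulus of liminf $\delta(k,L):=\sigma_2(\defP(k)+L)$ for $(a_n)$. Summing the displayed inequality over a block $[L,N-1]$ on which $a_n\ge\frac1{k+1}$ throughout and discarding $\|x_N-z\|\ge0$, the telescoped error is bounded, via Lemma~\ref{lemma-upper-bound-xnz} and \eqref{Mabn-Mrn-upper-bounds-series}, by $\Mxnz+\Mrn+\bxz\Mabn$, whence
\[
\sum_{n=L}^{N-1}\frac{\alpha_n\beta_n}{\alpha_n+\beta_n}\ \le\ \frac{(\Mxnz+\Mrn+\bxz\Mabn)(k+1)}{\eta\!\left(\frac1{\Mxnz(k+1)}\right)}\ <\ \defP(k),
\]
the strict inequality holding because the numerator of $\defP(k)$ in \eqref{def-Pk} carries an extra $+1$ while $\eta\le1$. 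On the other hand $\frac{\alpha_n\beta_n}{\alpha_n+\beta_n}\le\frac14<1$ (AM--GM with $\alpha_n+\beta_n\le1$), so Lemma~\ref{divergence-rate-geq-k} gives $\sigma_2(m)\ge m$ and $\sum_{n=0}^{L-1}\frac{\alpha_n\beta_n}{\alpha_n+\beta_n}\le L$. Taking $N=\sigma_2(\defP(k)+L)+1$, the divergence rate (C2) forces $\sum_{n=L}^{N-1}\frac{\alpha_n\beta_n}{\alpha_n+\beta_n}\ge(\defP(k)+L)-L=\defP(k)$, contradicting the bound above; hence some $N\in[L,\delta(k,L)]$ has $a_N<\frac1{k+1}$.

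Then I would package everything. With $a_n=\|x_n-Tx_n\|$ and $b_n:=2\Mxn(1-\alpha_n-\beta_n)+2\|r_n\|$, inequality \eqref{ineq-xnp1-Txnp1} and $\|x_n\|\le\Mxn$ give hypothesis (iii) of Lemma~\ref{prop-rate-conv-modulus-liminf}, while Lemma~\ref{cauchy-modulus-linear-comb} applied to (C1),(C3) furnishes a Cauchy modulus $\psi(k)=\max\{\sigma_1(4\Mxn(k+1)-1),\sigma_3(4k+3)\}$ of $\sum b_n$; evaluating at $2k+1$ reproduces exactly the $\max$-term of \eqref{def-Phi-rate-T-as-reg}. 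Feeding $\delta$ and $\psi$ into Lemma~\ref{prop-rate-conv-modulus-liminf} yields the rate $\varphi(k)=\delta(2k+1,\psi(2k+1)+1)=\Phi(k)$, which is $T$-asymptotic regularity. For $\Psi$, from \eqref{ineq-xnp1-xn} with $\beta_n\le1$ and $\|x_n\|\le\Mxn$ one has $\|x_{n+1}-x_n\|\le\|x_n-Tx_n\|+\Mxn(1-\alpha_n-\beta_n)+\|r_n\|$; since $\sigma_2(m)\ge m$, the value $\Phi(2k+1)$ exceeds both $\sigma_1(16\Mxn(k+1)-1)+1$ and $\sigma_3(16k+15)+1$, so for $n\ge\Psi(k)=\Phi(2k+1)$ the three summands are at most $\tfrac1{2(k+1)},\tfrac1{16(k+1)},\tfrac1{16(k+1)}$ by Lemma~\ref{lemma-series-conv-upper-bound}\eqref{lemma-series-conv}, totalling less than $\frac1{k+1}$.

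The hard part will be the modulus-of-liminf step: keeping the block-sum bookkeeping consistent under the index shift by $L$, verifying $\sum_{n=0}^{L-1}\frac{\alpha_n\beta_n}{\alpha_n+\beta_n}\le L$, and calibrating the constants so that the ceiling in \eqref{def-Pk} yields a \emph{strict} inequality against $\defP(k)$ and the final rate lands precisely on the published formula \eqref{def-Phi-rate-T-as-reg}.
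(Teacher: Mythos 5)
Your proposal is correct and follows essentially the same route as the paper: the same quasi-Fej\'er inequality via Lemma~\ref{main-lemma-xn-uc}\eqref{main-lemma-xn-uc-basic}, the same modulus of liminf $\sigma_2(\defP(k)+L)$ obtained by telescoping over the bad block, and the same packaging through Lemmas~\ref{prop-rate-conv-modulus-liminf}, \ref{cauchy-modulus-linear-comb}, \ref{lemma-series-conv-upper-bound} and \ref{divergence-rate-geq-k} for $\Phi$ and $\Psi$. The only (harmless) deviations are cosmetic: you take $\gamma=\theta/2$ directly where the paper takes $\gamma=\|x_n-z\|$ and then lower-bounds it via Corollary~\ref{cor-xn-Txn-xn-z-fixed-point} (yielding the identical decrement), and you bound $\frac{\alpha_n\beta_n}{\alpha_n+\beta_n}\le\frac14$ by AM--GM where the paper uses $\le\alpha_n\le 1$.
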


One can get for $(x_n)$, by using Lemma~\ref{main-lemma-xn-uc}.\eqref{main-lemma-xn-uc-special},
a result similar with the one proved by Kohlenbach \cite[Theorem 3.4]{Koh03} 
for the Krasnoselskii-Mann iteration.

\begin{proposition}\label{main-thm-T-as-reg-special-eta} 
In the hypotheses of Theorem~\ref{main-thm-T-as-reg}, assume moreover that  $\eta$ can be written as $\eta(\varepsilon)=\varepsilon\cdot\tilde{\eta}(\varepsilon)$ 
with $\tilde{\eta}$ increasing. Let $\deftP: \N\to \N$ be defined by 
\begin{equation}\label{def-Ptk-nice-modulus}
\deftP(k)= \left\lceil\frac{ (\Mxnz + \Mabn \bxz  + \Mrn + 1)(k+1)}{2\tilde{\eta}\left(\frac{1}{\Mxnz(k+1)}\right)}
\right\rceil.
\end{equation}
Then $(x_n)$ is $T$-asymptotically regular with rate $\tilde{\Phi}$ and asymptotically regular 
with rate  $\tilde{\Psi}$, where $\tilde{\Phi}$ is obtained from $\Phi$ 
 by taking $\deftP$ instead of $\defP$ in \eqref{def-Phi-rate-T-as-reg} 
and $\tilde{\Psi}(k)=\tilde{\Phi}(2k+1)$.
\end{proposition}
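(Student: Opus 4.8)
The plan is to run the proof of Theorem~\ref{main-thm-T-as-reg} essentially verbatim, replacing every appeal to Lemma~\ref{main-lemma-xn-uc}.\eqref{main-lemma-xn-uc-basic} by the corresponding appeal to Lemma~\ref{main-lemma-xn-uc}.\eqref{main-lemma-xn-uc-special}. The extra hypothesis $\eta(\varepsilon)=\varepsilon\cdot\tilde{\eta}(\varepsilon)$ with $\tilde{\eta}$ increasing is precisely what licenses this substitution, and the only place where the two arguments differ is in the quantitative estimate for a modulus of liminf of $(\|x_n-Tx_n\|)$; this is where $\defP$ gets replaced by $\deftP$. Everything downstream --- the application of Lemma~\ref{prop-rate-conv-modulus-liminf} and the passage from $T$-asymptotic to asymptotic regularity --- is formally identical. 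As a preliminary I would record the two uniform bounds that feed the argument: by Lemma~\ref{lemma-upper-bound-xnz} together with \eqref{def-bxz-bz-Mxnz} one has $\|x_n-z\|\le \Mxnz$, and hence $\|x_n\|\le \Mxnz+\bxz=\Mxn$, for all $n\in\N$.

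The heart of the proof is the claim that $\tilde{\delta}(k,L):=\sigma_2(\deftP(k)+L)$ is a modulus of liminf of $(\|x_n-Tx_n\|)$. To see this I would fix $k,L\in\N$, set $\theta:=\frac1{k+1}$ and $N:=\sigma_2(\deftP(k)+L)$, and suppose for contradiction that $\|x_n-Tx_n\|\ge\theta$ for every $n\in[L,N]$. For each such $n$, the assumption $\alpha_n+\beta_n>0$ from $(C2)$ and the bound $\|x_n-z\|\le\Mxnz$ let me apply Lemma~\ref{main-lemma-xn-uc}.\eqref{main-lemma-xn-uc-special} with $\delta^*:=\Mxnz$, obtaining
\[
\|\alpha_nx_n+\beta_nTx_n-(\alpha_n+\beta_n)z\|\le \|x_n-z\|-\frac{2\theta\alpha_n\beta_n}{\alpha_n+\beta_n}\,\tilde{\eta}\left(\frac{\theta}{\Mxnz}\right).
\]
Writing $z=(\alpha_n+\beta_n)z+(1-\alpha_n-\beta_n)z$ in \eqref{def-gen-KM} gives $\|x_{n+1}-z\|\le \|\alpha_nx_n+\beta_nTx_n-(\alpha_n+\beta_n)z\|+(1-\alpha_n-\beta_n)\|z\|+\|r_n\|$; telescoping this over $n\in[L,N]$, using $\|x_{N+1}-z\|\ge0$, and bounding $\|x_L-z\|+\|z\|\sum(1-\alpha_n-\beta_n)+\sum\|r_n\|$ by means of Lemma~\ref{lemma-upper-bound-xnz}, \eqref{def-bxz-bz-Mxnz} and \eqref{Mabn-Mrn-upper-bounds-series}, I arrive at
\[
2\theta\,\tilde{\eta}\left(\frac{\theta}{\Mxnz}\right)\sum_{n=L}^{N}\frac{\alpha_n\beta_n}{\alpha_n+\beta_n}\le \Mxnz+\bxz\Mabn+\Mrn.
\]
To close the loop I would use $(C2)$: the rate of divergence gives $\sum_{n=0}^{N}\frac{\alpha_n\beta_n}{\alpha_n+\beta_n}\ge \deftP(k)+L$, while $\sum_{n=0}^{L-1}\frac{\alpha_n\beta_n}{\alpha_n+\beta_n}\le L$ since each summand lies in $[0,1)$ (indeed $\frac{\alpha_n\beta_n}{\alpha_n+\beta_n}\le\frac14$), so that $\sum_{n=L}^{N}\frac{\alpha_n\beta_n}{\alpha_n+\beta_n}\ge \deftP(k)$. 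Substituting yields $\deftP(k)\le \frac{(\Mxnz+\Mabn\bxz+\Mrn)(k+1)}{2\tilde{\eta}\left(1/(\Mxnz(k+1))\right)}$, which contradicts the definition \eqref{def-Ptk-nice-modulus} of $\deftP(k)$ (whose numerator carries the extra $+1$). Hence some $N_0\in[L,N]$ satisfies $\|x_{N_0}-Tx_{N_0}\|<\theta$, proving the claim.

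It then remains to assemble the rates exactly as in Theorem~\ref{main-thm-T-as-reg}. From \eqref{ineq-xnp1-Txnp1} the sequence $a_n:=\|x_n-Tx_n\|$ satisfies $a_{n+1}\le a_n+b_n$ with $b_n:=2(1-\alpha_n-\beta_n)\|x_n\|+2\|r_n\|$; using the domination $b_n\le 2\Mxn(1-\alpha_n-\beta_n)+2\|r_n\|$ and Lemma~\ref{cauchy-modulus-linear-comb}, a Cauchy modulus $\psi$ of $\sum b_n$ is produced from $\sigma_1$ and $\sigma_3$. Feeding $\tilde{\delta}$ and $\psi$ into Lemma~\ref{prop-rate-conv-modulus-liminf} gives precisely $\tilde{\Phi}$, namely $\Phi$ with $\deftP$ in place of $\defP$. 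The asymptotic regularity rate $\tilde{\Psi}(k)=\tilde{\Phi}(2k+1)$ follows as before: for $n\ge\tilde{\Phi}(2k+1)$ one has $\|x_n-Tx_n\|\le\frac1{2(k+1)}$, while the term-wise rates furnished by Lemma~\ref{lemma-series-conv-upper-bound}.\eqref{lemma-series-conv} --- which are dominated by $\tilde{\Phi}(2k+1)$ because $\sigma_2(m)\ge m$ by Lemma~\ref{divergence-rate-geq-k} --- force $\Mxn(1-\alpha_n-\beta_n)$ and $\|r_n\|$ each below $\frac1{4(k+1)}$, so that \eqref{ineq-xnp1-xn} gives $\|x_{n+1}-x_n\|\le\frac1{k+1}$.

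The only genuinely new point, and the step I expect to require care, is checking that switching from part~\eqref{main-lemma-xn-uc-basic} to part~\eqref{main-lemma-xn-uc-special} of Lemma~\ref{main-lemma-xn-uc} really produces the constant $2\tilde{\eta}$ of \eqref{def-Ptk-nice-modulus} rather than $\eta$. The decrement in part~\eqref{main-lemma-xn-uc-special} is $\frac{2\theta\alpha_n\beta_n}{\alpha_n+\beta_n}\tilde{\eta}(\theta/\Mxnz)$, whereas part~\eqref{main-lemma-xn-uc-basic} applied with $\gamma=\theta/2$ only yields $\frac{\theta\alpha_n\beta_n}{\alpha_n+\beta_n}\eta(\theta/\Mxnz)=\frac{\theta^2}{\Mxnz}\cdot\frac{\alpha_n\beta_n}{\alpha_n+\beta_n}\tilde{\eta}(\theta/\Mxnz)$; the resulting gain of a factor $\approx\Mxnz/\theta=\Mxnz(k+1)$ is exactly what improves the rate to a quadratic one in the favourable regimes, and tracking this factor through the contradiction is the entire content of the modification. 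All remaining bookkeeping is identical to the proof of Theorem~\ref{main-thm-T-as-reg}.
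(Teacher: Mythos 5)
Your proposal is correct and follows essentially the same route as the paper, which proves this proposition by rerunning the modulus-of-liminf argument (Proposition~\ref{xn-modulus-liminf}) with Lemma~\ref{main-lemma-xn-uc}.\eqref{main-lemma-xn-uc-special} in place of Lemma~\ref{main-lemma-xn-uc}.\eqref{main-lemma-xn-uc-basic}, yielding the modulus $\tilde{\Delta}(k,L)=\sigma_2(\deftP(k)+L)$, and then reusing the assembly via Lemma~\ref{prop-rate-conv-modulus-liminf} verbatim. Your telescoping computation, the lower bound $\sum_{n=L}^{N}\frac{\alpha_n\beta_n}{\alpha_n+\beta_n}\geq \deftP(k)$, and the contradiction with the ceiling in \eqref{def-Ptk-nice-modulus} reproduce exactly the paper's estimates (stated there as $\|x_{N+1}-z\|\leq -1$), so nothing is missing.
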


The proofs of Theorem~\ref{main-thm-T-as-reg} and Proposition~\ref{main-thm-T-as-reg-special-eta}
are given in Section~\ref{main-results-proofs}.

In the following we give some consequences of our main results. 

\begin{corollary}\label{cor-Hilbert-spaces}
Assume that $X$ is a Hilbert space. Then Proposition~\ref{main-thm-T-as-reg-special-eta} holds with 
\begin{align*}
\deftP(k) & = 4\Mxnz(\Mxnz + \Mabn\bxz  + \Mrn + 1)(k+1)^2.
\end{align*}
\end{corollary}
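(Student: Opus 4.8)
The plan is to observe that this corollary is a pure specialization of Proposition~\ref{main-thm-T-as-reg-special-eta} to the concrete Hilbert-space modulus, so essentially no new argument is required beyond verifying the factorization hypothesis on $\eta$ and carrying out an arithmetic simplification.

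First I would recall, as noted earlier in the excerpt, that for a Hilbert space the mapping $\eta(\varepsilon)=\varepsilon^2/8$ is a modulus of uniform convexity. The key verification is that this $\eta$ already has exactly the shape required by Proposition~\ref{main-thm-T-as-reg-special-eta}: writing $\eta(\varepsilon)=\varepsilon\cdot\tilde{\eta}(\varepsilon)$ with $\tilde{\eta}(\varepsilon)=\varepsilon/8$, we see that $\tilde{\eta}$ is increasing on $(0,2]$, being linear with positive slope. Hence all hypotheses of the proposition are met and it may be invoked directly for this $\eta$.

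Next I would substitute $\tilde{\eta}(\varepsilon)=\varepsilon/8$ into the definition \eqref{def-Ptk-nice-modulus} of $\deftP$. Since $\tilde{\eta}\left(\frac{1}{\Mxnz(k+1)}\right)=\frac{1}{8\Mxnz(k+1)}$, the denominator $2\tilde{\eta}\left(\frac{1}{\Mxnz(k+1)}\right)$ equals $\frac{1}{4\Mxnz(k+1)}$, and a direct computation collapses the quantity inside the ceiling to $4\Mxnz(\Mxnz+\Mabn\bxz+\Mrn+1)(k+1)^2$.

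Finally, I would note that, because $\Mxnz,\Mabn,\bxz,\Mrn\in\N^*$ and $k\in\N$, this expression is already a nonnegative integer, so the ceiling function acts trivially and may simply be dropped, producing the stated closed form for $\deftP$. The rates $\tilde{\Phi}$ and $\tilde{\Psi}$ are then inherited verbatim from the proposition with this $\deftP$ substituted. I do not expect any genuine obstacle here: the only points requiring care are the algebraic simplification of the denominator and the remark that the ceiling is superfluous.
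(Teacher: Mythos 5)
Your proposal is correct and matches the paper's own (one-line) proof: both simply note that the Hilbert-space modulus $\eta(\varepsilon)=\varepsilon^2/8$ factors as $\varepsilon\cdot\tilde{\eta}(\varepsilon)$ with $\tilde{\eta}(\varepsilon)=\varepsilon/8$ increasing, then substitute into \eqref{def-Ptk-nice-modulus} and observe that the ceiling is superfluous since the resulting expression is an integer. One negligible slip: the paper only assumes $\Mabn,\Mrn\in\N$ (they may be $0$), not $\N^*$, but your conclusion is unaffected since $\Mxnz\in\N^*$ already makes $4\Mxnz(\Mxnz+\Mabn\bxz+\Mrn+1)(k+1)^2$ a positive integer.
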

\begin{proof}
Apply the fact that  a modulus of uniform convexity of $X$ is $\displaystyle \eta(\varepsilon)=\frac{\varepsilon^2}{8}
=\varepsilon\tilde{\eta}(\varepsilon)$, where 
$\displaystyle \tilde{\eta}(\varepsilon) = \frac{\varepsilon}8$ is increasing. 
\end{proof}

\begin{corollary}\label{main-thm-T-as-reg-cor-rn0}
Assume that $r_n=0$ for all $n\in\N$, hence $x_{n+1}=\alpha_nx_n+\beta_nTx_n$ for all $n\in \N$.
Then Theorem~\ref{main-thm-T-as-reg} holds with $\sigma_3(k)=0$, $\Mrn=0$, 
 $\Mxnz=\bxz + \Mabn \bxz$.
\end{corollary}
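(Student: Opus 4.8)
The plan is to recognize that this corollary is a pure specialization of Theorem~\ref{main-thm-T-as-reg}, so the whole task reduces to verifying that the substitutions $\sigma_3(k)=0$, $\Mrn=0$, $\Mxnz=\bxz+\Mabn\bxz$ are legitimate instances of the theorem's data; once that is checked, the conclusion follows by invoking the theorem verbatim. Since the hypotheses (C1) and (C2) concern only $(\alpha_n)$ and $(\beta_n)$, they are untouched by the assumption $r_n=0$, and only the quantities attached to $(r_n)$ require attention.

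First I would check that (C3) holds with $\sigma_3(k)=0$. With $r_n=0$ for all $n$, every partial sum $\sum_{i=0}^n\|r_i\|$ equals $0$, so the Cauchy condition $\left|\sum_{i=0}^{n+p}\|r_i\|-\sum_{i=0}^{n}\|r_i\|\right|=0\le\frac{1}{k+1}$ holds for every $k\in\N$, every $n\ge 0$ and every $p\in\N$. Hence $\sigma_3(k)=0$ is indeed a Cauchy modulus of $\sum_{n=0}^\infty\|r_n\|$, and (C3) is satisfied. Next I would confirm that $\Mrn=0$ is admissible: the requirement \eqref{Mabn-Mrn-upper-bounds-series} on the $(r_n)$-term reduces to $0\le\Mrn$, so $\Mrn=0\in\N$ is a valid upper bound (this is precisely the branch already noted in the discussion preceding Lemma~\ref{lemma-upper-bound-xnz}).

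Substituting $\Mrn=0$ into the definition \eqref{def-bxz-bz-Mxnz} then gives $\Mxnz=\bxz+\Mabn\bxz+0=\bxz+\Mabn\bxz$, which is the stated value. I would pause only to note that the theorem demands $\Mxnz\in\N^*$: since $\Mxnz=\bxz(1+\Mabn)\ge 1$ because $\bxz\in\N^*$, the positivity condition survives the substitution, and likewise $\Mxn=\Mxnz+\bxz\in\N^*$. With (C1), (C2), (C3) all in force and $\Mabn,\Mrn,\bxz,\Mxnz,\Mxn$ meeting the constraints \eqref{Mabn-Mrn-upper-bounds-series} and \eqref{def-bxz-bz-Mxnz}, a direct application of Theorem~\ref{main-thm-T-as-reg} produces the rates $\Phi$ and $\Psi$.

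I do not expect any genuine obstacle: the corollary is entirely a matter of bookkeeping, and the only point deserving care is the integrality/positivity check just described, confirming that the values $\Mrn=0\in\N$ and $\Mxnz=\bxz+\Mabn\bxz\in\N^*$ remain inside the ranges the theorem requires after the simplification $r_n\equiv 0$.
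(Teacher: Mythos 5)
Your proposal is correct and matches the paper's treatment: the paper states this corollary without proof precisely because it is the routine specialization you carry out, with $\sigma_3(k)=0$ trivially a Cauchy modulus (all partial sums of $\sum\|r_n\|$ vanish), $\Mrn=0$ a valid bound as already noted after \eqref{Mabn-Mrn-upper-bounds-series}, and $\Mxnz=\bxz+\Mabn\bxz$ obtained by substitution into \eqref{def-bxz-bz-Mxnz}. Your extra check that $\Mxnz=\bxz(1+\Mabn)\in\N^*$ survives the substitution is a sensible piece of diligence the paper leaves implicit.
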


\begin{corollary}\label{main-thm-T-as-reg-cor-inexact-KM}
Let $(x_n)$ be the inexact Krasnoselskii-Mann iteration:
$$x_0=x\in X, \quad x_{n+1}=(1-\beta_n)x_n + \beta_nTx_n + r_n.$$
Assume that (C2*) and (C3) hold, where 
\begin{align*}
(C2^*) & \quad \sum\limits_{n=0}^{\infty} \beta_n(1-\beta_n)=\infty 
\text{~with rate of divergence~} \sigma_2.
\end{align*}
Define  $ \displaystyle \defP^*: \N\to \N, \,\,
\defP^*(k)=\left\lceil\frac{ (\bxz + 2\Mrn+1)(k+1)}{\eta\left(\frac{1}{(\bxz+\Mrn)(k+1)}\right)}
\right\rceil.$

Then $(x_n)$ is $T$-asymptotically regular with rate 
$\Phi^*(k)  = \sigma_2\left(\defP^*(2k+1)+\sigma_3(8k+7)+1\right)$ and 
asymptotically regular with rate $\Psi^*(k)  =  \Phi^*(2k+1)$.

Furthermore, if $\eta$ can be written as $\eta(\varepsilon)=\varepsilon\cdot\tilde{\eta}(\varepsilon)$ 
with $\tilde{\eta}$ increasing, then one can define $\defP^*$ by 
$ \displaystyle \defP^*(k)=\left\lceil\frac{ (\bxz + 2\Mrn+1)(k+1)}{2\tilde{\eta}\left(\frac{1}{(\bxz+\Mrn)(k+1)}\right)}\right\rceil.$
In particular, if $X$ is a Hilbert space, then
\begin{align}\label{inexact-KM-Hilbert-defP}
\defP^*(k)=4(\bxz + \Mrn)(\bxz + 2\Mrn + 1)(k+1)^2.
\end{align}
\end{corollary}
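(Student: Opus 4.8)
The plan is to observe that the inexact Krasnoselskii--Mann iteration is exactly the instance of the generalized iteration \eqref{def-gen-KM} obtained by setting $\alpha_n := 1-\beta_n$, and then to specialize Theorem~\ref{main-thm-T-as-reg}, Proposition~\ref{main-thm-T-as-reg-special-eta} and Corollary~\ref{cor-Hilbert-spaces} to this choice. The whole argument is therefore careful bookkeeping of the constants rather than a genuinely new estimate.

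First I would record the consequences of $\alpha_n = 1-\beta_n$. Since then $\alpha_n+\beta_n = 1$ for all $n$, we have $1-\alpha_n-\beta_n = 0$ for all $n$, so the series $\sum_{n=0}^\infty (1-\alpha_n-\beta_n)$ is identically zero. Hence (C1) holds trivially: every function---in particular the zero function---is a Cauchy modulus of it, so we may take $\sigma_1 \equiv 0$, and, as noted after \eqref{Mabn-Mrn-upper-bounds-series}, $\Mabn = 0$. Moreover $\frac{\alpha_n\beta_n}{\alpha_n+\beta_n} = \beta_n(1-\beta_n)$, so condition (C2) becomes precisely (C2$^*$) with the same rate of divergence $\sigma_2$, while (C3) is assumed outright. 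Thus all hypotheses of Theorem~\ref{main-thm-T-as-reg} are met.

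Next I would substitute $\Mabn = 0$ into the constants of \eqref{def-bxz-bz-Mxnz}, which collapse to $\Mxnz = \bxz + \Mrn$ and $\Mxn = 2\bxz + \Mrn$. Feeding these into $\defP$ from \eqref{def-Pk} turns the numerator into $\Mxnz + \Mabn\bxz + \Mrn + 1 = \bxz + 2\Mrn + 1$ and the inner argument into $\frac{1}{\Mxnz(k+1)} = \frac{1}{(\bxz+\Mrn)(k+1)}$, i.e.\ exactly the stated $\defP^*$. In the formula \eqref{def-Phi-rate-T-as-reg} for $\Phi$ the inner maximum then simplifies: since $\sigma_1 \equiv 0$ while $\sigma_3(8k+7) \geq 0$, we have $\max\{\sigma_1(8\Mxn(k+1)-1),\sigma_3(8k+7)\} = \sigma_3(8k+7)$, so the $\Mxn$-dependence disappears entirely. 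This yields precisely $\Phi^*(k) = \sigma_2(\defP^*(2k+1)+\sigma_3(8k+7)+1)$ and $\Psi^*(k) = \Phi^*(2k+1)$, and Theorem~\ref{main-thm-T-as-reg} gives the first assertion.

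Finally, for the two ``furthermore'' claims I would run the same substitution through Proposition~\ref{main-thm-T-as-reg-special-eta} and Corollary~\ref{cor-Hilbert-spaces}: inserting $\Mabn = 0$ and $\Mxnz = \bxz + \Mrn$ into $\deftP$ from \eqref{def-Ptk-nice-modulus} produces the stated $\defP^*$ with $2\tilde{\eta}$ in the denominator, and into the Hilbert-space formula $\deftP(k) = 4\Mxnz(\Mxnz + \Mabn\bxz + \Mrn + 1)(k+1)^2$ produces exactly \eqref{inexact-KM-Hilbert-defP}. There is no serious obstacle; the only point requiring a moment's care is the justification that $\sigma_1$ may be taken identically zero and $\Mabn = 0$---both resting on the series $\sum(1-\alpha_n-\beta_n)$ vanishing identically---so that the $\sigma_1$-term drops out of the maximum in \eqref{def-Phi-rate-T-as-reg} and every $\Mabn\bxz$ contribution drops out of the constants.
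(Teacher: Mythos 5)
Your proposal is correct and follows essentially the same route as the paper's own proof: instantiate Theorem~\ref{main-thm-T-as-reg} with $\alpha_n=1-\beta_n$, note that (C1) holds trivially with $\sigma_1\equiv 0$ and $\Mabn=0$ so that (C2) becomes (C2$^*$), compute $\Mxnz=\bxz+\Mrn$ and $\Mxn=2\bxz+\Mrn$, and then invoke Proposition~\ref{main-thm-T-as-reg-special-eta} and Corollary~\ref{cor-Hilbert-spaces} for the two ``furthermore'' claims. Your bookkeeping of the constants (the numerator $\bxz+2\Mrn+1$, the argument $\frac{1}{(\bxz+\Mrn)(k+1)}$, and the disappearance of the $\sigma_1$-term from the maximum in \eqref{def-Phi-rate-T-as-reg}) matches the paper exactly.
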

\begin{proof}
Apply Theorem~\ref{main-thm-T-as-reg} with $\alpha_n=1-\beta_n$ for all $n\in\N$.
As $1-\alpha_n-\beta_n=0$, (C1) obviously holds with $\sigma_1(k)=0$ and $\Mabn=0$ and (C2) 
becomes (C2*). Then $\Mxnz=\bxz + \Mrn$, $\Mxn=2\bxz + \Mrn$. 
If $\eta$ can be written as $\eta(\varepsilon)=\varepsilon\cdot\tilde{\eta}(\varepsilon)$ 
with $\tilde{\eta}$ increasing, apply Proposition~\ref{main-thm-T-as-reg-special-eta}. 
 For the case when $X$ is a Hilbert space,  apply Corollary~\ref{cor-Hilbert-spaces}.
\end{proof}

By combining the previous two corollaries, we get rates of ($T$-)asymptotic regularity for 
the well-known Krasnoselskii-Mann iteration. The rate of $T$-asymptotic regularity that we obtain
for the Krasnoselskii-Mann iteration is very similar with the one obtained 
by Kohlenbach \cite[Theorem~3.4]{Koh03} by applying proof mining methods to a proof due to 
Groetsch \cite{Gro72}.

\begin{corollary}\label{main-thm-T-as-reg-cor-Mann}
Let $(x_n)$ be the Krasnoselskii-Mann iteration:
$$x_0=x\in X, \quad x_{n+1}=(1-\beta_n)x_n+\beta_nTx_n.$$
Then Corollary ~\ref{main-thm-T-as-reg-cor-inexact-KM} holds with $\sigma_3(k)=0$ and $\Mrn=0$.
\end{corollary}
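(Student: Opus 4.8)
The plan is to recognize the Krasnoselskii-Mann iteration $x_{n+1}=(1-\beta_n)x_n+\beta_nTx_n$ as precisely the inexact Krasnoselskii-Mann iteration treated in Corollary~\ref{main-thm-T-as-reg-cor-inexact-KM}, in the degenerate case where the error terms vanish, i.e. $r_n=0$ for all $n\in\N$. Thus the entire argument should reduce to checking that, for this choice of $(r_n)$, hypothesis (C3) holds with the claimed data, and then invoking Corollary~\ref{main-thm-T-as-reg-cor-inexact-KM} verbatim. This mirrors exactly the way Corollary~\ref{main-thm-T-as-reg-cor-rn0} specializes Theorem~\ref{main-thm-T-as-reg}.

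First I would verify (C3) for $r_n=0$. Since $\|r_n\|=0$ for every $n$, the series $\sum_{n=0}^\infty\|r_n\|$ converges (to $0$) and all of its partial sums are identically $0$; hence $|\tilde{a}_{n+p}-\tilde{a}_n|=0\le \frac1{k+1}$ for all $n,p,k\in\N$, so the constant function $\sigma_3(k)=0$ is indeed a Cauchy modulus of the series. Likewise $\sum_{n=0}^\infty r_n=0\le 0$, so $\Mrn=0$ is an admissible choice in \eqref{Mabn-Mrn-upper-bounds-series}. The hypothesis (C2*) is simply carried over unchanged from Corollary~\ref{main-thm-T-as-reg-cor-inexact-KM}.

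With (C3) confirmed in this form, I would then apply Corollary~\ref{main-thm-T-as-reg-cor-inexact-KM} with $\sigma_3(k)=0$ and $\Mrn=0$, reading off the resulting rates $\Phi^*$ and $\Psi^*$ directly; the improved versions under the assumption $\eta(\varepsilon)=\varepsilon\,\tilde{\eta}(\varepsilon)$ with $\tilde{\eta}$ increasing, as well as the quadratic Hilbert-space specialization, follow in the same way by substituting these values into the formulas for $\defP^*$. There is no genuine obstacle here: the statement is a pure specialization, and the only point requiring (entirely routine) verification is that the zero series admits $\sigma_3=0$ as a Cauchy modulus and $\Mrn=0$ as an upper bound for its sum; everything else is inherited directly from Corollary~\ref{main-thm-T-as-reg-cor-inexact-KM}.
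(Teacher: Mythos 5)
Your proposal is correct and matches the paper's intent exactly: the paper states this corollary without a separate proof, noting only that it follows ``by combining the previous two corollaries,'' i.e.\ by viewing the Krasnoselskii--Mann iteration as the inexact iteration of Corollary~\ref{main-thm-T-as-reg-cor-inexact-KM} with $r_n=0$, for which $\sigma_3(k)=0$ is trivially a Cauchy modulus of the zero series and $\Mrn=0$ satisfies \eqref{Mabn-Mrn-upper-bounds-series} (cf.\ the remark after \eqref{Mabn-Mrn-upper-bounds-series} and Corollary~\ref{main-thm-T-as-reg-cor-rn0}). Your explicit verification of (C3) for the zero error sequence is exactly the routine check the paper leaves implicit.
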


Applying Theorem~\ref{main-thm-T-as-reg} with $r_n=(1-\alpha_n-\beta_n)u$ we get rates for the
following iteration, studied in \cite{YaoLioZho09,Hu08,HuLiu09}.

\begin{corollary}\label{main-thm-T-as-reg-cor-1-alphan-betan-u}
Let $(x_n)$ be the iteration defined as follows:
$$
x_0=x, \quad x_{n+1}=\alpha_nx_n+\beta_nTx_n+ (1-\alpha_n-\beta_n)u,$$ 
where $u\in X$, $u\ne 0$. 

Then Theorem~\ref{main-thm-T-as-reg} holds with $\Mrn=\Mabn\left\lceil \|u\|\right\rceil$ and 
$\sigma_3(k)=\sigma_1(\left\lceil \|u\|\right\rceil(k+1)-1)$. 
\end{corollary}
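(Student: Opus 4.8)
The plan is to recognize the given iteration as an instance of the generalized Krasnoselskii--Mann-type iteration \eqref{def-gen-KM} with the specific error term $r_n := (1-\alpha_n-\beta_n)u$, and then to verify that hypothesis (C3) follows automatically from (C1), producing the claimed $\sigma_3$ and $\Mrn$. Since (C1) and (C2) are already among the standing hypotheses carried over from Theorem~\ref{main-thm-T-as-reg}, the only real work is to discharge (C3) and to check that $\Mrn = \Mabn\lceil\|u\|\rceil$ is an admissible bound in \eqref{Mabn-Mrn-upper-bounds-series}.

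First I would compute $\|r_n\| = (1-\alpha_n-\beta_n)\|u\|$, using that $1-\alpha_n-\beta_n\ge 0$ because $\alpha_n+\beta_n\le 1$. Hence the partial sums of $\sum\|r_n\|$ are exactly $\|u\|$ times the partial sums of $\sum(1-\alpha_n-\beta_n)$, so convergence of the latter (guaranteed by (C1)) immediately yields convergence of the former. For the bound I would estimate $\sum_{n=0}^\infty \|r_n\| = \|u\|\sum_{n=0}^\infty(1-\alpha_n-\beta_n) \le \|u\|\,\Mabn \le \lceil\|u\|\rceil\,\Mabn$, where the first inequality uses \eqref{Mabn-Mrn-upper-bounds-series} for the (C1) series. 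Since $\lceil\|u\|\rceil\,\Mabn\in\N$, this shows that $\Mrn=\Mabn\lceil\|u\|\rceil$ satisfies \eqref{Mabn-Mrn-upper-bounds-series}.

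The key step is obtaining the Cauchy modulus $\sigma_3$. Writing $S_n = \sum_{i=0}^n\|r_i\|$ and $T_n=\sum_{i=0}^n(1-\alpha_i-\beta_i)$, one has $S_{n+p}-S_n = \|u\|(T_{n+p}-T_n)$ for all $n,p\in\N$. I would then argue that for every $n \ge \sigma_1(\lceil\|u\|\rceil(k+1)-1)$ the defining property of $\sigma_1$ gives $|T_{n+p}-T_n|\le \frac{1}{\lceil\|u\|\rceil(k+1)}$, whence $|S_{n+p}-S_n| = \|u\|\,|T_{n+p}-T_n| \le \frac{\|u\|}{\lceil\|u\|\rceil(k+1)} \le \frac{1}{k+1}$, using $\|u\|\le\lceil\|u\|\rceil$. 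This is precisely the statement that $\sigma_3(k)=\sigma_1(\lceil\|u\|\rceil(k+1)-1)$ is a Cauchy modulus of $\sum\|r_n\|$, so (C3) holds. Note that $\lceil\|u\|\rceil\ge 1$ since $u\ne 0$, so $\lceil\|u\|\rceil(k+1)-1$ is a genuine natural number and $\sigma_1$ is evaluated legitimately.

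With (C1), (C2), (C3) all in force and the constants $\Mrn,\sigma_3$ as above, I would conclude by simply invoking Theorem~\ref{main-thm-T-as-reg}. The only point requiring care --- and thus the main, if minor, obstacle --- is the index bookkeeping in the Cauchy modulus: the shift $\lceil\|u\|\rceil(k+1)-1$ must be chosen so that after multiplying the error estimate by the scalar $\|u\|$ one still lands below $\frac{1}{k+1}$, which is exactly why $\lceil\|u\|\rceil$ rather than $\|u\|$ appears. Because the scalar $\|u\|$ is in general not an integer, Lemma~\ref{cauchy-modulus-linear-comb} does not apply directly, and this scaling estimate has to be carried out by hand as above.
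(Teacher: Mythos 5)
Your proposal is correct and matches the paper's intended argument: the corollary is stated there without an explicit proof precisely because it amounts to instantiating Theorem~\ref{main-thm-T-as-reg} with $r_n=(1-\alpha_n-\beta_n)u$, and your verification that $\sigma_3(k)=\sigma_1(\lceil\|u\|\rceil(k+1)-1)$ is a Cauchy modulus of $\sum\|r_n\|$ and that $\Mrn=\Mabn\lceil\|u\|\rceil$ satisfies \eqref{Mabn-Mrn-upper-bounds-series} is exactly the intended bookkeeping. Your observations that $u\ne 0$ makes $\lceil\|u\|\rceil(k+1)-1\in\N$ legitimate and that Lemma~\ref{cauchy-modulus-linear-comb} is inapplicable for non-integer scalars are both accurate.
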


\section{Examples}

In the sequel, we compute rates of ($T$-)asymptotic regularity of $(x_n)$ for  particular choices 
of the parameter sequences. Let us prove first the following very useful lemma. 
	
\begin{lemma}\label{Cauchy-modulus-bound-u-i+L2}
Let $t\in \R_+$, $L\in \N^*$ and define
\begin{align*}
\varphi(k)=\left\lceil t\right\rceil(k+1), \quad  \varphi^*(k)=\max\{\left\lceil t\right\rceil(k+1)-L, 0\}, 
\quad M_t=\begin{cases} 0 & \text{~if~} t=0, \\
\left\lceil t\left(\frac1L+\frac1{L^2}\right)\right\rceil & \text{~if~} t\ne 0.
\end{cases}
\end{align*}
Then 
\begin{enumerate}
\item $\varphi$  and $\varphi^*$ are Cauchy moduli of $\sum\limits_{n=0}^{\infty}\frac{t}{(n+L)^2}$. 
\item\label{Cauchy-modulus-bound-u-i+L2-bound} $\sum\limits_{n=0}^{\infty}\frac{t}{(n+L)^2}\leq M_t \leq 2\left\lceil t\right\rceil$.
\end{enumerate}
\end{lemma}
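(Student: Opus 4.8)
The plan is to reduce both parts to the single telescoping estimate
$\frac{1}{(i+L)^2}\le\frac{1}{(i+L-1)(i+L)}=\frac{1}{i+L-1}-\frac{1}{i+L}$,
valid whenever $i+L\ge 2$ (which always holds in the ranges we sum over, since $L\ge 1$). This collapses every relevant tail or total of the series into a telescoping sum. Since the case $t=0$ makes every quantity identically $0$, I would dispose of it at once and assume $t\ne 0$ for the main arguments.

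For part (i), I recall that a Cauchy modulus of the series is by definition a Cauchy modulus of its partial sums $\tilde a_n=\sum_{i=0}^{n}\frac{t}{(i+L)^2}$, and that, the summands being nonnegative, $\tilde a_{n+p}-\tilde a_n=\sum_{i=n+1}^{n+p}\frac{t}{(i+L)^2}$. First I would bound this tail uniformly in $p$ by $t\sum_{i=n+1}^{\infty}\bigl(\frac{1}{i+L-1}-\frac{1}{i+L}\bigr)=\frac{t}{n+L}$. Then I would check that for $n\ge\varphi^*(k)$ one has $n+L\ge\lceil t\rceil(k+1)\ge t(k+1)$, so that $\frac{t}{n+L}\le\frac1{k+1}$; this shows $\varphi^*$ is a Cauchy modulus. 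Because $\varphi(k)\ge\varphi^*(k)$ for every $k$, the same bound applies for all $n\ge\varphi(k)$, so $\varphi$ is a Cauchy modulus as well.

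For part (ii), I would split off the $n=0$ term and apply the same telescoping bound to the remainder: $\sum_{n=0}^{\infty}\frac{t}{(n+L)^2}=\frac{t}{L^2}+\sum_{n=1}^{\infty}\frac{t}{(n+L)^2}\le\frac{t}{L^2}+t\sum_{n=1}^{\infty}\bigl(\frac{1}{n+L-1}-\frac{1}{n+L}\bigr)=\frac{t}{L^2}+\frac{t}{L}=t\bigl(\frac1L+\frac1{L^2}\bigr)$. Since $M_t$ is the ceiling of this last quantity, the first inequality $\sum\le M_t$ is immediate. For the second, $L\ge 1$ gives $\frac1L+\frac1{L^2}\le 2$, hence $t\bigl(\frac1L+\frac1{L^2}\bigr)\le 2t\le 2\lceil t\rceil$; as $2\lceil t\rceil$ is an integer dominating the argument of the ceiling, $M_t\le 2\lceil t\rceil$ follows.

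The whole argument is elementary and I anticipate no real obstacle. The only points that warrant a little care are the index bookkeeping in the telescoping sums (checking that the tail from $i=n+1$ collapses to $\frac{1}{n+L}$ and the sum from $n=1$ collapses to $\frac1L$) and verifying that the $\max$ in the definition of $\varphi^*$ does not spoil the inequality $n+L\ge t(k+1)$ in the degenerate case $\lceil t\rceil(k+1)<L$, where $\varphi^*(k)=0$ and one instead uses $n+L\ge L>\lceil t\rceil(k+1)$.
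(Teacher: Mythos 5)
Your proposal is correct and follows essentially the same route as the paper: the same telescoping bound $\frac{1}{(i+L)^2}\le\frac{1}{i+L-1}-\frac{1}{i+L}$ collapses the tails to $\frac{t}{n+L}\le\frac{1}{k+1}$ for part (i) (with the reduction to $\varphi^*$ via $\varphi\ge\varphi^*$), and splitting off the $n=0$ term yields the bound $t\left(\frac1L+\frac1{L^2}\right)$ for part (ii). Your extra remark on the degenerate case $\varphi^*(k)=0$ is a careful touch that the paper passes over silently, but both arguments are identical in substance.
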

\begin{proof}
\begin{enumerate}
\item As $\varphi\geq \varphi^*$, it is enough to prove that  $\varphi^*$ is a Cauchy modulus of 
$\sum\limits_{n=0}^{\infty}\frac{t}{(n+L)^2}$. 
Let  $k\in\N$ and $n\geq \varphi^*(k)$. It follows that $n+L\geq \left\lceil t\right\rceil(k+1)\geq t(k+1)$. 
Then for all $p\in\N$, 
\begin{align*}
\sum\limits_{i=0}^{n+p}\frac{t}{(i+L)^2} - \sum\limits_{i=0}^{n}\frac{t}{(i+L)^2}
& = \sum\limits_{i=n+1}^{n+p}\frac{t}{(i+L)^2} \leq \sum\limits_{i=n+1}^{n+p}\frac{t}{(i+L-1)(i+L)} \\
& = \sum\limits_{i=n+1}^{n+p}\left(\frac{t}{i+L-1}-\frac{t}{i+L}\right) = \frac{t}{n+L}-\frac{t}{n+p+L} \\
& \leq  \frac{t}{n+L} \leq \frac1{k+1}. 
\end{align*}
\item The case $t=0$ is obvious. Assume that $t\ne 0$. Then for all $n\in\N$,
\begin{align*} 
\sum\limits_{i=0}^{n}\frac{t}{(i+L)^2} & = \frac{t}{L^2} + \sum\limits_{i=1}^n\frac{t}{(i+L)^2}
\leq  \frac{t}{L^2} + \frac{t}{L} \le \left\lceil t\left(\frac1L+\frac1{L^2}\right)\right\rceil = M_t.
\end{align*} 
As $\frac1L+\frac1{L^2}\leq 2$, we get that $M_t \leq 2\left\lceil t\right\rceil$.
\end{enumerate}
\end{proof}

\subsection{Example 1}

Assume that for all $n\in \N$, 
$$ \alpha_n=1-\lambda, \quad \beta_n=\lambda, \quad r_n=\frac1{(n+L)^2}r^*,$$
where $\lambda\in (0,1)$, $L\in \N^*$ and $r^*\in X$. 
Let us denote in the sequel 
$$\Lambda = \left\lceil\frac{1}{\lambda(1-\lambda)}\right\rceil. $$ 

\begin{proposition}\label{example1-result1}
$(x_n)$ is $T$-asymptotically regular with rate
\begin{align*}
\Phi^*(k) & = \Lambda\big(\defP^*(2k+1) + 8\left\lceil\|r^*\|\right\rceil(k+1)+1\big)
\end{align*}
 and asymptotically regular  with rate 
\begin{align*}
\Psi^*(k) & = \Lambda\left(\defP^*(4k+3) + 16\left\lceil\|r^*\|\right\rceil(k+1)+1\right),
\end{align*}
where $\defP^*$ is defined as in Corollary~\ref{main-thm-T-as-reg-cor-inexact-KM} with 
$\Mrn=2\left\lceil \|r^*\|\right\rceil$.
\end{proposition}
\begin{proof}
One can easily see that (C2*) holds  with $\sigma_2(k)=k\Lambda$. 
Furthermore, by applying Lemma \ref{Cauchy-modulus-bound-u-i+L2} with $t:=\|r^*\|$ we get that 
(C3) holds with $\sigma_3(k)=\left\lceil\|r^*\|\right\rceil(k+1)$
and that one can take $\Mrn$ as above. Apply  now Corollary \ref{main-thm-T-as-reg-cor-inexact-KM}. 
\end{proof}

In the setting of Hilbert spaces, we get quadratic rates of ($T$-)asymptotic regularity of $(x_n)$.

\begin{proposition}
Assume furthermore that $X$ is a Hilbert space. Then 
$(x_n)$ is $T$-asymptotically regular with rate
\begin{align*}
\Phi^*(k) & = 16\Lambda(\bxz + 2\left\lceil\|r^*\|\right\rceil)(\bxz + 4\left\lceil\|r^*\|\right\rceil + 1)(k+1)^2 + 8\Lambda\left\lceil\|r^*\|\right\rceil(k+1)+\Lambda
\end{align*}
 and asymptotically regular  with rate 
\begin{align*}
\Psi^*(k) & = 64\Lambda(\bxz + 2\left\lceil\|r^*\|\right\rceil)(\bxz + 4\left\lceil\|r^*\|\right\rceil + 1)(k+1)^2 + 16	\Lambda\left\lceil\|r^*\|\right\rceil(k+1) + \Lambda.
\end{align*}
\end{proposition}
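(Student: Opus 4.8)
The plan is to derive the two quadratic rates purely by specializing the rates already established in Proposition~\ref{example1-result1} to the Hilbert-space case, where the auxiliary quantity $\defP^*$ has the explicit closed form recorded in \eqref{inexact-KM-Hilbert-defP}. No fresh asymptotic-regularity argument is required: once $\defP^*$ is replaced by its quadratic expression, both $\Phi^*$ and $\Psi^*$ become explicit, and the only work left is arithmetic simplification.

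Concretely, I would start from the two formulas supplied by Proposition~\ref{example1-result1}, namely $\Phi^*(k)=\Lambda\big(\defP^*(2k+1)+8\left\lceil\|r^*\|\right\rceil(k+1)+1\big)$ and $\Psi^*(k)=\Lambda\big(\defP^*(4k+3)+16\left\lceil\|r^*\|\right\rceil(k+1)+1\big)$, recalling that these are obtained with $\Mrn=2\left\lceil\|r^*\|\right\rceil$. Since $X$ is now a Hilbert space, Corollary~\ref{main-thm-T-as-reg-cor-inexact-KM} in its Hilbert-space form \eqref{inexact-KM-Hilbert-defP} gives $\defP^*(k)=4(\bxz+\Mrn)(\bxz+2\Mrn+1)(k+1)^2$, which with $\Mrn=2\left\lceil\|r^*\|\right\rceil$ becomes $\defP^*(k)=4(\bxz+2\left\lceil\|r^*\|\right\rceil)(\bxz+4\left\lceil\|r^*\|\right\rceil+1)(k+1)^2$.

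The key step is then to evaluate $\defP^*$ at the shifted arguments $2k+1$ and $4k+3$. Here one uses $((2k+1)+1)^2=(2k+2)^2=4(k+1)^2$ and $((4k+3)+1)^2=(4k+4)^2=16(k+1)^2$, so that $\defP^*(2k+1)=16(\bxz+2\left\lceil\|r^*\|\right\rceil)(\bxz+4\left\lceil\|r^*\|\right\rceil+1)(k+1)^2$ and $\defP^*(4k+3)=64(\bxz+2\left\lceil\|r^*\|\right\rceil)(\bxz+4\left\lceil\|r^*\|\right\rceil+1)(k+1)^2$. Substituting these into the formulas for $\Phi^*$ and $\Psi^*$ and distributing the outer factor $\Lambda$ over each summand yields exactly the stated expressions.

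I do not expect a genuine obstacle, since the statement is a direct corollary of the two earlier results. The only point demanding care is the bookkeeping of the integer shifts, that is, correctly turning $(2k+2)^2$ into $4(k+1)^2$ and $(4k+4)^2$ into $16(k+1)^2$; this is precisely what produces the leading coefficients $16\Lambda$ and $64\Lambda$ in $\Phi^*$ and $\Psi^*$, respectively, while the lower-order terms $8\Lambda\left\lceil\|r^*\|\right\rceil(k+1)+\Lambda$ and $16\Lambda\left\lceil\|r^*\|\right\rceil(k+1)+\Lambda$ come unchanged from distributing $\Lambda$.
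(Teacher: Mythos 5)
Your proposal is correct and coincides with the paper's own proof, which simply applies Proposition~\ref{example1-result1} with $\defP^*$ given by the Hilbert-space formula \eqref{inexact-KM-Hilbert-defP}. Your arithmetic at the shifted arguments, namely $\defP^*(2k+1)=16(\bxz+2\left\lceil\|r^*\|\right\rceil)(\bxz+4\left\lceil\|r^*\|\right\rceil+1)(k+1)^2$ and $\defP^*(4k+3)=64(\bxz+2\left\lceil\|r^*\|\right\rceil)(\bxz+4\left\lceil\|r^*\|\right\rceil+1)(k+1)^2$, checks out and yields exactly the stated rates.
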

\begin{proof}
Apply Proposition~\ref{example1-result1} with $\defP^*(k)$ defined by \eqref{inexact-KM-Hilbert-defP}.
\end{proof}

If, furthermore, $r^*=0$, then $(x_n)$ is the Krasnoselskii-Mann 
iteration and
$$ \Phi^*(k) = 16\Lambda\bxz(\bxz +1)(k+1)^2 +\Lambda \quad \text{and}\quad  \Psi^*(k) =64\Lambda\bxz  (\bxz + 1)(k+1)^2 + \Lambda. $$

The rate $\Phi^*$ of $T$-asymptotic regularity  is slightly worse (roughly, by a constant factor)
than the ones obtained for the Krasnoselskii-Mann iteration by Kohlenbach \cite[Corollary 1]{Koh01a} in Hilbert spaces
and by the second author \cite[Corollary 20]{Leu07} in CAT(0) spaces.

\subsection{Example 2}

Assume that for all $n\in \N$,
$$\alpha_n=\lambda, \quad \beta_n=1-\lambda-\frac1{(n+J)^2}, \quad r_n=\frac1{(n+L)^2}{r^*},$$
where $J\in \N, J\geq 2$, $L\in \N^*$, $\lambda \in \left(0,\frac{J^2-1}{J^2}\right)$ and $r^*\in X$. \\
Remark that for all $n\in \N$, $\alpha_n+\beta_n=1-\frac1{(n+J)^2}<1$ and $\beta_n\geq \beta_0=\frac{J^2-1}{J^2} - \lambda > 0$.

\begin{proposition}\label{rates-ex2}
$(x_n)$ is $T$-asymptotically regular with rate
\begin{align*}
\Phi(k) & = \Lambda\defP(2k+1) + 16\Lambda(2\bxz + \left\lceil \|r^*\|\right\rceil)(k+1) + 3\Lambda - 1
\end{align*}
and asymptotically regular with rate
\begin{align*}
\Psi(k) & = \Lambda\defP(4k+3) + 32\Lambda(2\bxz + \left\lceil \|r^*\|\right\rceil)(k+1) + 3\Lambda - 1,
\end{align*}
where $\displaystyle \Lambda = \left\lceil\frac{1}{\lambda(1-\lambda)}\right\rceil$. 
\end{proposition}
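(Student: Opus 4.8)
The plan is to obtain both rates directly from Theorem~\ref{main-thm-T-as-reg} by instantiating its three quantitative hypotheses for the present parameters. Since here $\alpha_n+\beta_n=1-\frac1{(n+J)^2}<1$ and the errors $r_n$ are genuinely nonzero, none of the special-case corollaries applies, so I would invoke the full theorem. The whole argument then reduces to: (a) producing the Cauchy moduli $\sigma_1,\sigma_3$, the divergence rate $\sigma_2$, and the constants $\Mabn,\Mrn$; (b) reading off $\Mxnz,\Mxn$ from \eqref{def-bxz-bz-Mxnz}; and (c) substituting into \eqref{def-Phi-rate-T-as-reg}--\eqref{def-Psi-rate-as-reg} and simplifying. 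I expect the divergence condition (C2) to carry essentially all of the mathematical content; everything else is bookkeeping.

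First I would dispatch (C1) and (C3), both covered by Lemma~\ref{Cauchy-modulus-bound-u-i+L2}. For (C1) the defect sequence is $1-\alpha_n-\beta_n=\frac1{(n+J)^2}$, so the lemma with $t:=1$, $L:=J$ gives the Cauchy modulus $\sigma_1(k)=k+1$ and, via part~\ref{Cauchy-modulus-bound-u-i+L2-bound} and $\lceil 1\rceil=1$, the bound $\sum_n(1-\alpha_n-\beta_n)\le 2\lceil 1\rceil=2$; I therefore take $\Mabn=2$. For (C3) we have $\|r_n\|=\frac{\|r^*\|}{(n+L)^2}$, so the same lemma with $t:=\|r^*\|$, $L:=L$ yields $\sigma_3(k)=\lceil\|r^*\|\rceil(k+1)$ and $\Mrn=2\lceil\|r^*\|\rceil$.

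The substance is (C2). Here $\alpha_n+\beta_n=1-\frac1{(n+J)^2}\in(0,1)$, so the first clause holds; the hard part is the divergence rate. The key observation is that, since $\alpha_n+\beta_n\le 1$,
\[
\frac{\alpha_n\beta_n}{\alpha_n+\beta_n}\ge \alpha_n\beta_n=\lambda\Big(1-\lambda-\tfrac1{(n+J)^2}\Big)=\lambda(1-\lambda)-\frac{\lambda}{(n+J)^2},
\]
which linearizes the summand, separating the constant main term $\lambda(1-\lambda)$ from a summable error. Summing and bounding $\sum_{i=0}^m\frac1{(i+J)^2}\le\Mabn=2$ gives $\sum_{i=0}^m\frac{\alpha_i\beta_i}{\alpha_i+\beta_i}\ge (m+1)\lambda(1-\lambda)-2\lambda$. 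Since $\Lambda\lambda(1-\lambda)\ge 1$ and $\lambda<1$, taking $m=(k+2)\Lambda-1$ makes this at least $(k+2)-2\lambda=k+2(1-\lambda)>k$, so (C2) holds with rate of divergence $\sigma_2(k)=(k+2)\Lambda-1$.

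It remains to assemble and simplify. From \eqref{def-bxz-bz-Mxnz} with $\Mabn=2$ and $\Mrn=2\lceil\|r^*\|\rceil$ one gets $\Mxnz=3\bxz+2\lceil\|r^*\|\rceil$ and $\Mxn=4\bxz+2\lceil\|r^*\|\rceil$, so that $8\Mxn=16(2\bxz+\lceil\|r^*\|\rceil)$. Hence $\sigma_1(8\Mxn(k+1)-1)=8\Mxn(k+1)=16(2\bxz+\lceil\|r^*\|\rceil)(k+1)$ dominates $\sigma_3(8k+7)=8\lceil\|r^*\|\rceil(k+1)$, so the maximum in \eqref{def-Phi-rate-T-as-reg} equals $16(2\bxz+\lceil\|r^*\|\rceil)(k+1)$. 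Substituting this together with $\sigma_2(m)=\Lambda m+2\Lambda-1$ into \eqref{def-Phi-rate-T-as-reg} yields exactly the stated $\Phi$, and $\Psi(k)=\Phi(2k+1)$ from \eqref{def-Psi-rate-as-reg} then gives the stated $\Psi$ after collecting the factor $2$ from $(2k+1)+1=2(k+1)$. The one point needing care is the self-consistent use of the uniform bound $\Mabn=2$ (rather than the sharper $\Mabn=1$, which is also valid): this is precisely what makes $8\Mxn$ factor cleanly as $16(2\bxz+\lceil\|r^*\|\rceil)$ and what forces the divergence rate, and hence the additive constant, to come out as $(k+2)\Lambda-1$ and $3\Lambda-1$ respectively.
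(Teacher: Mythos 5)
Your proposal is correct and takes essentially the same route as the paper's proof: Lemma~\ref{Cauchy-modulus-bound-u-i+L2} gives (C1) and (C3) with $\sigma_1(k)=k+1$, $\sigma_3(k)=\lceil\|r^*\|\rceil(k+1)$, $\Mabn=2$, $\Mrn=2\lceil\|r^*\|\rceil$; (C2) is handled by the identical linearization $\frac{\alpha_n\beta_n}{\alpha_n+\beta_n}\ge\alpha_n\beta_n=\lambda(1-\lambda)-\frac{\lambda}{(n+J)^2}$ together with the bound $2$ on $\sum_n\frac{1}{(n+J)^2}$, yielding the paper's $\sigma_2(k)=\Lambda(k+2)-1$; and the conclusion follows by substituting $\Mxnz=3\bxz+2\lceil\|r^*\|\rceil$, $\Mxn=4\bxz+2\lceil\|r^*\|\rceil$ into Theorem~\ref{main-thm-T-as-reg}. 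The only difference is that you spell out the final max-comparison and arithmetic simplification (and the deliberate choice of $\Mabn=2$ over the sharper $1$), which the paper leaves implicit in the statement of the proposition.
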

\begin{proof}
We can apply Lemma \ref{Cauchy-modulus-bound-u-i+L2} to get that
(C1) holds with $\sigma_1(k)=k+1$, (C3) holds with $\sigma_3(k)=\left\lceil\|r^*\|\right\rceil(k+1)$
and that one can take $\Mabn=2$, $\Mrn=2\left\lceil \|r^*\|\right\rceil$. \\

\noindent \textbf{Claim:} 
(C2) holds with rate of divergence $\sigma_2(n)=\Lambda(n+2)-1$.\\
\textbf{Proof of claim:}
We have that for all $n\in \N$, 
\begin{align*}
\sum_{i=0}^{\sigma_2(n)}\frac{\alpha_i\beta_i}{\alpha_i+\beta_i} & \geq  
\sum_{i=0}^{\sigma_2(n)}\alpha_i\beta_i \quad \text{as $\alpha_i+\beta_i = 1 - \frac1{(i+J)^2}<1$}\\
& = \sum_{i=0}^{\sigma_2(n)} \lambda\left(1-\lambda-\frac1{(i+J)^2}\right) 
 = (\sigma_2(n)+1) \lambda(1-\lambda) - \lambda\sum_{i=0}^{\sigma_2(n)} \frac1{(i+J)^2}  \\
& \geq  (\sigma_2(n)+1) \lambda(1-\lambda) - 2\lambda, \quad 
\text{by Lemma \ref{Cauchy-modulus-bound-u-i+L2}.\eqref{Cauchy-modulus-bound-u-i+L2-bound} with $t:=1$ and $L:=J$}\\
& = \Lambda(n+2) \lambda(1-\lambda) - 2\lambda \geq (n+2) - 2\lambda > n \quad \text{as $\lambda<1$.} \qquad  \qquad \qquad \qquad \qquad  \hfill \blacksquare
\end{align*}

By \eqref{def-bxz-bz-Mxnz}, we have that 
$$
\Mxnz = 3\bxz + 2\left\lceil \|r^*\|\right\rceil, \quad  
\Mxn = 4\bxz + 2\left\lceil \|r^*\|\right\rceil. 
$$
Apply now Theorem \ref{main-thm-T-as-reg} to conclude the proof.
\end{proof}

As an application of (the proof of) Proposition \ref{rates-ex2} and Corollary \ref{cor-Hilbert-spaces} we get 
quadratic rates of ($T$-)asymptotic regularity in the case of Hilbert spaces.

\begin{proposition}
Assume furthermore that $X$ is a Hilbert space. Then 
$(x_n)$ is $T$-asymptotically regular with rate
\begin{align*}
\Phi(k) & = 16\Lambda M_1(k+1)^2 + 16\Lambda M_2(k+1) + 3\Lambda - 1
\end{align*}
 and asymptotically regular  with rate 
\begin{align*}
\Psi(k) & = 64\Lambda M_1(k+1)^2 + 32\Lambda M_2(k+1) + 3\Lambda - 1, 
\end{align*}
where $M_1=(3\bxz + 2\left\lceil \|r^*\|\right\rceil)(5\bxz + 4\left\lceil \|r^*\|\right\rceil+ 1)$ 
and $M_2=2\bxz + \left\lceil \|r^*\|\right\rceil$.
\end{proposition}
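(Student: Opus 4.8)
The plan is to feed the Hilbert-space modulus of uniform convexity into the machinery already assembled for Example~2, thereby upgrading the abstract rate $\defP$ from Theorem~\ref{main-thm-T-as-reg} to the explicit quadratic $\deftP$ supplied by Corollary~\ref{cor-Hilbert-spaces}. First I would observe that a Hilbert space carries the modulus $\eta(\varepsilon)=\varepsilon^2/8=\varepsilon\cdot\tilde\eta(\varepsilon)$ with $\tilde\eta(\varepsilon)=\varepsilon/8$ increasing, so the extra hypothesis of Proposition~\ref{main-thm-T-as-reg-special-eta} is satisfied and the rates $\tilde\Phi,\tilde\Psi$ of that proposition apply to our iteration.

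Next I would carry over, verbatim from the proof of Proposition~\ref{rates-ex2}, the data computed there for this particular choice of parameters: $\Mabn=2$, $\Mrn=2\lceil\|r^*\|\rceil$, $\sigma_2(n)=\Lambda(n+2)-1$, together with $\Mxnz=3\bxz+2\lceil\|r^*\|\rceil$ and $\Mxn=4\bxz+2\lceil\|r^*\|\rceil$. The only genuinely new ingredient is to evaluate the $\deftP$ of Corollary~\ref{cor-Hilbert-spaces} at these constants. A one-line computation gives $\Mxnz+\Mabn\bxz+\Mrn+1=5\bxz+4\lceil\|r^*\|\rceil+1$, whence
\[
\deftP(k)=4\Mxnz(\Mxnz+\Mabn\bxz+\Mrn+1)(k+1)^2=4M_1(k+1)^2,
\]
with $M_1$ exactly as in the statement.

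Finally, I would substitute this quadratic into the rate $\tilde\Phi$, which by Proposition~\ref{main-thm-T-as-reg-special-eta} is obtained from the $\Phi$ of Proposition~\ref{rates-ex2} by replacing $\defP$ with $\deftP$; that is,
\[
\tilde\Phi(k)=\Lambda\deftP(2k+1)+16\Lambda(2\bxz+\lceil\|r^*\|\rceil)(k+1)+3\Lambda-1.
\]
Using $\deftP(2k+1)=4M_1(2k+2)^2=16M_1(k+1)^2$ and writing $M_2=2\bxz+\lceil\|r^*\|\rceil$ collapses this to the claimed $\tilde\Phi(k)=16\Lambda M_1(k+1)^2+16\Lambda M_2(k+1)+3\Lambda-1$. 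The asymptotic-regularity rate then follows from the general identity $\tilde\Psi(k)=\tilde\Phi(2k+1)$: expanding $(2k+2)^2=4(k+1)^2$ and $2k+2=2(k+1)$ turns the quadratic term into $64\Lambda M_1(k+1)^2$ and the linear term into $32\Lambda M_2(k+1)$, matching the stated $\tilde\Psi$.

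I expect no real obstacle beyond careful bookkeeping. The one place demanding attention is the nesting of the index shift $k\mapsto 2k+1$, which occurs both inside the argument of $\defP$ (from the definition of $\Phi$ in Theorem~\ref{main-thm-T-as-reg}) and again in passing from $\tilde\Phi$ to $\tilde\Psi$; one must track which occurrence of $k+1$ gets doubled so that the factors $4$ and $2$ arising from squaring and from doubling land on the correct quadratic and linear terms, while the additive constant $3\Lambda-1$ is left untouched.
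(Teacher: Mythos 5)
Your proposal is correct and follows exactly the route the paper intends: it instantiates Corollary~\ref{cor-Hilbert-spaces} (i.e., Proposition~\ref{main-thm-T-as-reg-special-eta} with $\tilde{\eta}(\varepsilon)=\varepsilon/8$) with the constants $\Mabn=2$, $\Mrn=2\lceil\|r^*\|\rceil$, $\Mxnz=3\bxz+2\lceil\|r^*\|\rceil$, $\Mxn=4\bxz+2\lceil\|r^*\|\rceil$ and the rate of divergence $\sigma_2(n)=\Lambda(n+2)-1$ from the proof of Proposition~\ref{rates-ex2}, and your bookkeeping of the two index shifts $k\mapsto 2k+1$ (giving $\deftP(2k+1)=16M_1(k+1)^2$ and then the factors $64$ and $32$ in $\tilde\Psi$) is accurate. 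The computations match the stated $\Phi$ and $\Psi$, so this is a correct filling-in of the proof the paper leaves implicit.
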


\section{Proof of the main results}\label{main-results-proofs}

\begin{lemma}\label{lemma-M-upper-bound-xnz-xn} 
For all $n\in\N$, $\|x_n-z\|  \leq \Mxnz$ and $\|x_n\| \leq \Mxn$.
\end{lemma}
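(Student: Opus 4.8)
The plan is to bound the two quantities directly using the results already established in the excerpt.

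For the bound $\|x_n-z\| \leq \Mxnz$, I would invoke Lemma~\ref{lemma-upper-bound-xnz}, which (under hypotheses (C1) and (C3), and $z\in Fix(T)$) gives $\|x_n-z\| \leq \|x-z\| + \Mabn\|z\| + \Mrn$ for all $n\in\N$. Now I substitute the estimates from the definition \eqref{def-bxz-bz-Mxnz}: since $\bxz \geq \max\{\|x-z\|,\|z\|\}$, we have $\|x-z\| \leq \bxz$ and $\Mabn\|z\| \leq \Mabn\bxz$, so
\begin{align*}
\|x_n-z\| \leq \|x-z\| + \Mabn\|z\| + \Mrn \leq \bxz + \Mabn\bxz + \Mrn = \Mxnz,
\end{align*}
where the final equality is precisely the definition of $\Mxnz$ in \eqref{def-bxz-bz-Mxnz}.

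For the second bound $\|x_n\| \leq \Mxn$, the natural move is the triangle inequality relative to the fixed point $z$: $\|x_n\| \leq \|x_n-z\| + \|z\|$. The first term is bounded by $\Mxnz$ from the step just completed, and the second by $\bxz$ since $\bxz \geq \|z\|$. Hence
\begin{align*}
\|x_n\| \leq \|x_n-z\| + \|z\| \leq \Mxnz + \bxz = \Mxn,
\end{align*}
again using the definition $\Mxn = \Mxnz + \bxz$ from \eqref{def-bxz-bz-Mxnz}.

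I expect no genuine obstacle here: the lemma is a bookkeeping consolidation that packages Lemma~\ref{lemma-upper-bound-xnz} together with the abbreviations introduced in \eqref{def-bxz-bz-Mxnz}, so that later proofs can cite uniform constants $\Mxnz,\Mxn$ rather than re-deriving them. The only point requiring a little care is confirming that the hypotheses of Lemma~\ref{lemma-upper-bound-xnz} are in force — namely (C1), (C3), and the existence of a fixed point $z$ — but these are all part of the standing assumptions of Theorem~\ref{main-thm-T-as-reg}, within whose proof this lemma sits.
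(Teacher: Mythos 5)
Your proof is correct and follows essentially the same route as the paper: apply Lemma~\ref{lemma-upper-bound-xnz} together with the bounds from \eqref{def-bxz-bz-Mxnz} to obtain $\|x_n-z\|\leq \bxz+\Mabn\bxz+\Mrn=\Mxnz$, then the triangle inequality $\|x_n\|\leq\|x_n-z\|+\|z\|\leq \Mxnz+\bxz=\Mxn$. Your closing remark about the standing hypotheses is also accurate; nothing is missing.
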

\begin{proof}
Apply Lemma \ref{lemma-upper-bound-xnz} and \eqref{def-bxz-bz-Mxnz} to get that 
\begin{align*}
\|x_n-z\| & \leq \|x-z\| +  \Mabn\|z\| + \Mrn \leq \bxz +  \Mabn \bxz + \Mrn = \Mxnz.
\end{align*}
Furthermore,
$\|x_n\|\leq \|x_n-z\|+\|z\|\leq \Mxnz + \bxz = \Mxn$.
\end{proof}

\subsection{A modulus of liminf}\label{section-modulus-liminf}

\begin{proposition}\label{xn-modulus-liminf}
$\liminf\limits_{n\to\infty} \|x_n-Tx_n\|=0$ with modulus of liminf $\Delta:\N\times\N\to\N$ defined by
\begin{equation}\label{def-modulus-liminf}
\Delta(k,L)=\sigma_2(\defP(k)+L),
\end{equation}
where $\defP$ is defined by \eqref{def-Pk}.
\end{proposition}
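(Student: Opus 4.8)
The plan is to argue by contradiction. Fix $k,L\in\N$ and write $\theta:=\frac{1}{k+1}$ and $M:=\Delta(k,L)=\sigma_2(\defP(k)+L)$. Suppose that no $N\in[L,M]$ satisfies $\|x_N-Tx_N\|<\frac1{k+1}$, i.e. that $\|x_n-Tx_n\|\ge\theta$ for every $n\in[L,M]$. I will show that this forces $\|x_n-z\|$ to decrease so much over $[L,M]$ that it would have to become negative, a contradiction.

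First I would establish a uniform per-step decrease. For each $n\in[L,M]$, apply Lemma~\ref{main-lemma-xn-uc}.\eqref{main-lemma-xn-uc-basic} to the fixed point $z$ with $\delta:=\Mxnz$ and $\gamma:=\frac{\theta}{2}$: the bound $\|x_n-z\|\le\Mxnz$ is Lemma~\ref{lemma-M-upper-bound-xnz-xn}, the bound $\|x_n-z\|\ge\frac{\theta}{2}$ follows from Corollary~\ref{cor-xn-Txn-xn-z-fixed-point} since $\|x_n-z\|\ge\frac12\|x_n-Tx_n\|\ge\frac\theta2$, and $\alpha_n+\beta_n>0$ is part of (C2). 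This gives
\[
\|\alpha_nx_n+\beta_nTx_n-(\alpha_n+\beta_n)z\|\le\|x_n-z\|-\theta\,\eta\!\left(\tfrac{1}{\Mxnz(k+1)}\right)\frac{\alpha_n\beta_n}{\alpha_n+\beta_n}.
\]
Writing $x_{n+1}-z=[\alpha_nx_n+\beta_nTx_n-(\alpha_n+\beta_n)z]-(1-\alpha_n-\beta_n)z+r_n$ and using the triangle inequality then yields, for all $n\in[L,M]$,
\[
\|x_{n+1}-z\|\le\|x_n-z\|-\theta\,\eta\!\left(\tfrac{1}{\Mxnz(k+1)}\right)\frac{\alpha_n\beta_n}{\alpha_n+\beta_n}+(1-\alpha_n-\beta_n)\|z\|+\|r_n\|.
\]

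Next I would telescope this over $n=L,\dots,M$ and drop the nonnegative term $\|x_{M+1}-z\|$, obtaining
\[
\theta\,\eta\!\left(\tfrac{1}{\Mxnz(k+1)}\right)\sum_{n=L}^{M}\frac{\alpha_n\beta_n}{\alpha_n+\beta_n}\le\|x_L-z\|+\|z\|\sum_{n=L}^{M}(1-\alpha_n-\beta_n)+\sum_{n=L}^{M}\|r_n\|.
\]
The right-hand side is at most $\Mxnz+\Mabn\bxz+\Mrn$ by Lemma~\ref{lemma-M-upper-bound-xnz-xn}, $\|z\|\le\bxz$, and the series bounds \eqref{Mabn-Mrn-upper-bounds-series}. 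For the left-hand sum I would use that $\sigma_2$ is a rate of divergence: since $M=\sigma_2(\defP(k)+L)$ we have $\sum_{n=0}^{M}\frac{\alpha_n\beta_n}{\alpha_n+\beta_n}\ge\defP(k)+L$, and since $\alpha_n+\beta_n\le1$ forces $\frac{\alpha_n\beta_n}{\alpha_n+\beta_n}\le\frac14<1$, the initial block $\sum_{n=0}^{L-1}$ is at most $L$; subtracting gives $\sum_{n=L}^{M}\frac{\alpha_n\beta_n}{\alpha_n+\beta_n}\ge\defP(k)$.

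Finally, using $\theta=\frac1{k+1}$ and the bound $\defP(k)\ge\frac{(\Mxnz+\Mabn\bxz+\Mrn+1)(k+1)}{\eta(1/(\Mxnz(k+1)))}$ from \eqref{def-Pk}, the left-hand side is at least $\Mxnz+\Mabn\bxz+\Mrn+1$, strictly exceeding the right-hand bound $\Mxnz+\Mabn\bxz+\Mrn$, which is the desired contradiction. The step I expect to need the most care is the index bookkeeping in the telescoping (summing up to $M$ rather than $M-1$, and isolating the block $\sum_{n=0}^{L-1}$), chosen precisely so that the constants align into the clean strict inequality; everything else is a direct assembly of the cited lemmas together with the definition of $\defP$.
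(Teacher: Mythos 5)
Your proof is correct and follows essentially the same route as the paper's: contradiction, the per-step decrease from Lemma~\ref{main-lemma-xn-uc}.\eqref{main-lemma-xn-uc-basic} applied with $\delta=\Mxnz$ and $\theta=\frac{1}{k+1}$, telescoping over $[L,\Delta(k,L)]$, the lower bound $\sum_{n=L}^{\Delta(k,L)}\frac{\alpha_n\beta_n}{\alpha_n+\beta_n}\ge\defP(k)$ via the rate of divergence $\sigma_2$, and the definition of $\defP$ to force a negative (hence contradictory) bound. Your only harmless variations are instantiating $\gamma:=\frac{1}{2(k+1)}$ directly (justified by Corollary~\ref{cor-xn-Txn-xn-z-fixed-point}) where the paper takes $\gamma:=\|x_n-z\|$ and only afterwards uses $2\|x_n-z\|\ge\frac1{k+1}$, and bounding $\frac{\alpha_n\beta_n}{\alpha_n+\beta_n}\le\frac14$ by AM--GM where the paper uses the cruder $\frac{\alpha_n\beta_n}{\alpha_n+\beta_n}\le\alpha_n\le 1$.
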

\begin{proof}
Assume by contradiction that $\Delta$ is not a modulus of liminf for $(\|x_n-Tx_n\|)$. 
Then there are $k, L\in\N$ such that 
\begin{center}
 $\|x_n-Tx_n\|\geq \frac1{k+1}$ for all $n\in [L, \Delta(k,L)]$.
\end{center}
Let $n\in [L, \Delta(k,L)]$ be arbitrary. Then $x_n \ne Tx_n$, so $x_n \ne z$, that is 
$\|x_n-z\|>0$. By (C2) and Lemma \ref{lemma-M-upper-bound-xnz-xn}, we have that 
$\alpha_n+\beta_n >0$ and $\|x_n-z\| \leq \Mxnz$. 
Thus, we can apply Lemma~\ref{main-lemma-xn-uc}.\eqref{main-lemma-xn-uc-basic}
with $\delta=\Mxnz$, $\gamma=\|x_n-z\|$ and $\theta =\frac1{k+1}$ to get that 
\begin{align*}
 \|\alpha_nx_n+\beta_nTx_n-(\alpha_n+\beta_n)z\| & \leq \|x_n-z\| -
\frac{2\|x_n-z\|\alpha_n\beta_n}{\alpha_n+\beta_n}\eta\left(\frac{1}{\Mxnz(k+1)}\right).
\end{align*}
Since, by Corollary \ref{cor-xn-Txn-xn-z-fixed-point},  $2\|x_n-z\| \ge \|x_n-Tx_n\| \ge \frac{1}{k+1}$, 
we get that 
\begin{align}\label{mod-inf-prelim-1}
 \|\alpha_nx_n+\beta_nTx_n-(\alpha_n+\beta_n)z\| & \leq \|x_n-z\| 
 -\frac{\alpha_n\beta_n}{(\alpha_n+\beta_n)(k+1)}\eta\left(\frac{1}{\Mxnz(k+1)}\right).
\end{align}
As
\begin{align*}
\|x_{n+1}-z\| &= \|\alpha_nx_n+\beta_nTx_n + r_n - z\|\\
&= \|(\alpha_nx_n+\beta_nTx_n-(\alpha_n+\beta_n)z) - (1- \alpha_n - \beta_n)z + r_n\|\\
&\le \|\alpha_nx_n+\beta_nTx_n-(\alpha_n+\beta_n)z\| + (1-\alpha_n - \beta_n)\|z\| + \|r_n\|,
\end{align*}
it follows from \eqref{mod-inf-prelim-1} that 
\begin{align*}
\|x_{n+1}-z\| & \leq \|x_n-z\|-\frac{\alpha_n\beta_n}{(\alpha_n+\beta_n)(k+1)}\eta\left(\frac{1}{\Mxnz(k+1)}\right)
+(1-\alpha_n-\beta_n)\|z\| + \|r_n\|.
\end{align*}
Adding up the above for $n=L,\ldots,\Delta(k,L)$, we get that 
\begin{align*}
\|x_{\Delta(k,L)+1} - z\| & \le \|x_L-z\|- \frac{1}{k+1}\eta\left(\frac{1}{\Mxnz(k+1)}\right)
\sum\limits_{n=L}^{\Delta(k,L)}\frac{\alpha_n\beta_n}{\alpha_n+\beta_n} \\
& + \|z\|\sum\limits_{n=L}^{\Delta(k,L)}(1-\alpha_n-\beta_n) +  
\sum\limits_{n=L}^{\Delta(k,L)}\|r_n\| \\
& \stackrel{\eqref{Mabn-Mrn-upper-bounds-series}}{\le}  \|x_L-z\|- \frac{1}{k+1}\eta\left(\frac{1}{\Mxnz(k+1)}\right)
\sum\limits_{n=L}^{\Delta(k,L)}\frac{\alpha_n\beta_n}{\alpha_n+\beta_n} + 
\|z\|\Mabn + \Mrn \\
& \le \Mxnz  - \frac{1}{k+1}\eta\left(\frac{1}{\Mxnz(k+1)}\right)
\sum\limits_{n=L}^{\Delta(k,L)}\frac{\alpha_n\beta_n}{\alpha_n+\beta_n} + 
\|z\|\Mabn + \Mrn \quad \text{by Lemma \ref{lemma-M-upper-bound-xnz-xn}}\\
& \stackrel{\eqref{def-bxz-bz-Mxnz}}{\le} (\Mxnz + \Mabn\bxz  + \Mrn) - \frac{1}{k+1}\eta\left(\frac{1}{\Mxnz(k+1)}\right)
\sum\limits_{n=L}^{\Delta(k,L)}\frac{\alpha_n\beta_n}{\alpha_n+\beta_n}.
\end{align*}
Remark that 
\begin{align*}
\sum\limits_{n=L}^{\Delta(k,L)}\frac{\alpha_n\beta_n}{\alpha_n+\beta_n} 
& = \sum\limits_{n=L}^{\sigma_2(\defP(k)+L)}\frac{\alpha_n\beta_n}{\alpha_n+\beta_n} 
 = \sum\limits_{n=0}^{\sigma_2(\defP(k)+L)}\frac{\alpha_n\beta_n}{\alpha_n+\beta_n} 
 - \sum\limits_{n=0}^{L-1}\frac{\alpha_n\beta_n}{\alpha_n+\beta_n} \\
 &  \geq (\defP(k)+L) - \sum\limits_{n=0}^{L-1}\frac{\alpha_n\beta_n}{\alpha_n+\beta_n} \quad \text{by (C2)}\\
& \ge (\defP(k)+L)-L = \defP(k), \quad 
\text{as~}\frac{\alpha_n\beta_n}{\alpha_n+\beta_n}
\leq \alpha_n \leq 1.
\end{align*}
It follows that 
\begin{align*}
\|x_{\Delta(k,L)+1} - z\| &  \le (\Mxnz + \Mabn\bxz  + \Mrn) -  
\frac{\defP(k)}{k+1}\eta\left(\frac{1}{\Mxnz(k+1)}\right)\\ 
& \stackrel{\eqref{def-Pk}}{=} (\Mxnz + \Mabn\bxz  + \Mrn) - 
\left\lceil\frac{ (\Mxnz + \Mabn\bxz  + \Mrn + 1)(k+1)}{\eta\left(\frac{1}{\Mxnz(k+1)}\right)}\right\rceil
\frac{\eta\left(\frac{1}{\Mxnz(k+1)}\right)}{k+1} \\
& \leq (\Mxnz + \Mabn\bxz  + \Mrn) - (\Mxnz + \Mabn\bxz  + \Mrn + 1)=-1,
\end{align*}
which is a contradiction.  Thus, $\Delta$ is a modulus of liminf for $(\|x_n-Tx_n\|)$.
\end{proof}

\begin{proposition}\label{xn-modulus-liminf-eta-special}
Assume that $\eta$ can be written as $\eta(\varepsilon)=\varepsilon\cdot\tilde{\eta}(\varepsilon)$ 
with $\tilde{\eta}$ increasing. 

Then $\liminf\limits_{n\to\infty} \|x_n-Tx_n\|=0$ with modulus of liminf 
$\tilde{\Delta}:\N\times\N\to\N$ defined by
\begin{equation}\label{def-modulus-liminf-eta-special}
\tilde{\Delta}(k,L)=\sigma_2(\deftP(k)+L),
\end{equation}
where $\deftP$ is defined by \eqref{def-Ptk-nice-modulus}.
\end{proposition}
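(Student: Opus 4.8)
The plan is to mirror exactly the contradiction argument used in the proof of Proposition \ref{xn-modulus-liminf}, replacing the single appeal to Lemma~\ref{main-lemma-xn-uc}.\eqref{main-lemma-xn-uc-basic} with the corresponding part \eqref{main-lemma-xn-uc-special} of that lemma, which is available precisely under the extra hypothesis $\eta(\varepsilon)=\varepsilon\cdot\tilde\eta(\varepsilon)$ with $\tilde\eta$ increasing. First I would assume, for contradiction, that $\tilde\Delta$ is not a modulus of liminf of $(\|x_n-Tx_n\|)$, so there exist $k,L\in\N$ with $\|x_n-Tx_n\|\ge \frac1{k+1}$ for every $n\in[L,\tilde\Delta(k,L)]$. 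Fixing such an $n$, the strict positivity $x_n\ne Tx_n$ forces $x_n\ne z$, hence $\|x_n-z\|>0$, while (C2) gives $\alpha_n+\beta_n>0$ and Lemma~\ref{lemma-M-upper-bound-xnz-xn} gives $\|x_n-z\|\le\Mxnz$. These are exactly the hypotheses of Lemma~\ref{main-lemma-xn-uc}.\eqref{main-lemma-xn-uc-special} with $\delta^*:=\Mxnz$ and $\theta:=\frac1{k+1}$.

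Applying that lemma yields
\[
\|\alpha_nx_n+\beta_nTx_n-(\alpha_n+\beta_n)z\|
\le \|x_n-z\|-\frac{2}{k+1}\cdot\frac{\alpha_n\beta_n}{\alpha_n+\beta_n}\,
\tilde\eta\!\left(\frac{1}{\Mxnz(k+1)}\right),
\]
which is the analogue of \eqref{mod-inf-prelim-1}; note the key structural difference is that the decrement now carries the factor $2\tilde\eta$ rather than $\eta$, and crucially it no longer needs the auxiliary lower bound $2\|x_n-z\|\ge\frac1{k+1}$ that was extracted from Corollary~\ref{cor-xn-Txn-xn-z-fixed-point} in the previous proof, since $\theta$ enters the estimate directly. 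I would then pass to $\|x_{n+1}-z\|$ by the same triangle-inequality expansion, controlling the error terms $(1-\alpha_n-\beta_n)\|z\|$ and $\|r_n\|$, and sum over $n=L,\dots,\tilde\Delta(k,L)$.

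The telescoping and bounding steps go through verbatim: using \eqref{Mabn-Mrn-upper-bounds-series}, Lemma~\ref{lemma-M-upper-bound-xnz-xn}, and \eqref{def-bxz-bz-Mxnz} to bound the summed error terms by $\Mabn\bxz+\Mrn$ and the starting value $\|x_L-z\|$ by $\Mxnz$, and using (C2) together with $\frac{\alpha_n\beta_n}{\alpha_n+\beta_n}\le\alpha_n\le1$ to obtain $\sum_{n=L}^{\tilde\Delta(k,L)}\frac{\alpha_n\beta_n}{\alpha_n+\beta_n}\ge\deftP(k)$. Substituting the definition \eqref{def-Ptk-nice-modulus} of $\deftP$, the coefficient $\frac{2\deftP(k)}{k+1}\tilde\eta\!\left(\frac{1}{\Mxnz(k+1)}\right)$ is, by the ceiling, at least $\Mxnz+\Mabn\bxz+\Mrn+1$, so the final bound on $\|x_{\tilde\Delta(k,L)+1}-z\|$ becomes $\le -1$, a contradiction. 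I expect no genuine obstacle here; the only point requiring care is the bookkeeping of the factor $2$, which is exactly why $\deftP$ in \eqref{def-Ptk-nice-modulus} carries a $2$ in the denominator where $\defP$ in \eqref{def-Pk} does not, so that the two proofs yield the same closing inequality $-1$.
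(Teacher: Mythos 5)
Your proposal is correct and follows essentially the same route as the paper, whose proof is precisely to repeat the argument of Proposition~\ref{xn-modulus-liminf} with Lemma~\ref{main-lemma-xn-uc}.\eqref{main-lemma-xn-uc-special} (applied with $\delta^*:=\Mxnz$, $\theta:=\frac{1}{k+1}$) in place of Lemma~\ref{main-lemma-xn-uc}.\eqref{main-lemma-xn-uc-basic}. Your bookkeeping is accurate, including the observation that the auxiliary lower bound $2\|x_n-z\|\ge\frac{1}{k+1}$ from Corollary~\ref{cor-xn-Txn-xn-z-fixed-point} is no longer needed and that the factor $2$ in the decrement is exactly compensated by the $2$ in the denominator of \eqref{def-Ptk-nice-modulus}, yielding the same closing contradiction $\|x_{\tilde{\Delta}(k,L)+1}-z\|\le -1$.
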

\begin{proof}
Follow the proof of Proposition~\ref{xn-modulus-liminf}, but apply
Lemma~\ref{main-lemma-xn-uc}.\eqref{main-lemma-xn-uc-special} instead of 
Lemma~\ref{main-lemma-xn-uc}.\eqref{main-lemma-xn-uc-basic}.
\end{proof}

\subsection{Proof of Theorem~\ref{main-thm-T-as-reg} and Proposition~\ref{main-thm-T-as-reg-special-eta}}

By \eqref{ineq-xnp1-Txnp1} and Lemma \ref{lemma-M-upper-bound-xnz-xn} we get that for all $n\in\N$, 
\begin{align*}
\|x_{n+1}-Tx_{n+1}\| & \le \|x_n-Tx_n\| + 2(1-\alpha_n-\beta_n)\|x_n\| + 2\|r_n\| \\
& \le \|x_n-Tx_n\| + 2\Mxn(1-\alpha_n-\beta_n) + 2\|r_n\|.
\end{align*}
Let us verify that the hypotheses of Proposition~\ref{prop-rate-conv-modulus-liminf} hold  
with 
\begin{center} 
$a_n:=\|x_n-Tx_n\| $ and $b_n:= 2\Mxn(1-\alpha_n-\beta_n)  + 2\|r_n\|$.
\end{center}
By Proposition~\ref{xn-modulus-liminf}, $\liminf\limits_{n\to\infty} a_n=0$ with modulus 
of liminf $\Delta$ given by \eqref{def-modulus-liminf}. Furthermore, by (C1), (C3) and 
Lemma~\ref{cauchy-modulus-linear-comb}, we get that 
$\psi(k)=\max\{\sigma_1(4\Mxn(k+1)-1), \sigma_3(4k+3)\}$ is a Cauchy modulus of $(b_n)$. 

Thus, we can apply Proposition~\ref{prop-rate-conv-modulus-liminf} to get that  
$\lim\limits_{n\to\infty} \|x_n-Tx_n\| =0$ with rate of convergence
\begin{align*}
\Phi(k) & = \Delta\left(2k+1, \psi(2k+1)+1\right) =
\sigma_2(\defP(2k+1)+\psi(2k+1)+1)\\
& = \sigma_2\left(\defP(2k+1)+\max\{\sigma_1(4\Mxn(2k+2)-1), \sigma_3(4(2k+1)+3)\}+1\right)\\
& = \sigma_2\big(\defP(2k+1)+\max\{\sigma_1(8\Mxn(k+1)-1), \sigma_3(8k+7)\}+1\big).
\end{align*}

Let us prove now that $\Psi$ defined by \eqref{def-Psi-rate-as-reg} is a rate of asymptotic 
regularity. First, one can easily see  that $\frac{\alpha_n\beta_n}{\alpha_n+\beta_n} < 1$ for every $n\in\N$. 
Thus, by Lemma \ref{divergence-rate-geq-k}, $\sigma_2(k) \ge k$. 
We get that 
\begin{align*}
\Psi(k)&=\Phi(2k+1)=\sigma_2\left(\defP(4k+3)+\max\{\sigma_1(16\Mxn(k+1)-1), \sigma_3(16k+15)\}+1\right)\\
&\ge \defP(4k+3)+\max\{\sigma_1(16\Mxn(k+1)-1), \sigma_3(16k+15)\}+1\\
&\ge \max\{\sigma_1(16\Mxn(k+1)-1), \sigma_3(16k+15)\}+1.
\end{align*}
Let $n\geq \Psi(k)$. It follows that 
\begin{align*}
\|x_{n+1}-x_n\| & \stackrel{\eqref{ineq-xnp1-xn}}{\le} \beta_n\|Tx_n-x_n\|+(1-\alpha_n-\beta_n)\|x_n\| + \|r_n\| \\
& \le \|Tx_n-x_n\|+(1-\alpha_n-\beta_n)\Mxn + \|r_n\| \quad \text{by Lemma \ref{lemma-M-upper-bound-xnz-xn} }\\
& \le  \frac1{2(k+1)} + (1-\alpha_n-\beta_n)\Mxn + \|r_n\| \quad \text{as $n\geq \Phi(2k+1)$}\\
& \le  \frac1{2(k+1)} + (1-\alpha_n-\beta_n)\Mxn + \frac1{16(k+1)} \\
& \text{by Lemma \ref{lemma-series-conv-upper-bound}.\eqref{lemma-series-conv} and the fact that 
$n\geq \sigma_3(16k+15)+1$}\\
& \le  \frac1{2(k+1)} +  \frac1{16(k+1)} +  \frac1{16(k+1)} < \frac1{k+1} \\
& \text{by Lemma \ref{lemma-series-conv-upper-bound}.\eqref{lemma-series-conv} and the fact that 
$n\geq \sigma_1(16\Mxn(k+1)-1)+1$.}
\end{align*}
Thus, Theorem~\ref{main-thm-T-as-reg} is proved.  

The proof of Proposition~\ref{main-thm-T-as-reg-special-eta} follows 
the same line, with the difference that we apply Proposition~\ref{xn-modulus-liminf-eta-special} 
instead of Proposition~\ref{xn-modulus-liminf}.

\mbox{}

\noindent \textbf{Acknowledgement} The first author acknowledges the support of FCT – Fundação
para a Ciência e Tecnologia through a doctoral scholarship with reference number
2022.12585.BD as well as the support of the research center CEMS.UL under the
FCT funding \href{https://doi.org/10.54499/UIDB/04561/2020}{UIDB/04561/2020}.

\end{document}